\newtheorem{theorem}{Theorem}
\newtheorem{corollary}{Corollary}
\newtheorem{proposition}{Proposition}
\newtheorem{remark}{Remark}
\font\QEDlogofont=msam10 at 10pt
\def\QEDblogo{\hbox{\QEDlogofont\char'004}}
\newif\ifnologo\nologofalse
\newif\iflogo
\newif\ifblogo\blogofalse
\newif\iftopprhead\topprheadfalse
\def\prooffont{\normalsize}
\newenvironment{proof}{\par\addvspace{6pt plus2pt}
\par
\noindent\prooffont{\bf\em Proof:}\hskip6pt\ignorespaces}{%
   \ifblogo\hskip1.2pt
            \blacksquare
   \else
   \ifnologo
   \else
   \hfill
            \QEDblogo
   \fi\fi
\par\addvspace{6pt plus2pt}\global\topprheadfalse}%
\begin{document}

\begin{center}

\begin{large}
{\bf A new characterization of ultraspherical, Hermite, and Chebyshev polynomials of the first kind}
\end{large}
\vspace{10pt}

{\bf Mohammed Mesk $^{\rm a,}$\footnote{Email:
m\_mesk@yahoo.fr} and Mohammed Brahim Zahaf $^{\rm
b,c,}$\footnote{Email:
m\_b\_zahaf@yahoo.fr}}\vspace{6pt}
\\ \vspace{6pt} $^{\rm a}${\em Laboratoire d'Analyse Non Lin\'eaire et Math\'ematiques Appliqu\'ees,\break
Universit\'e de Tlemcen, BP 119,  13000-Tlemcen, Alg\'erie.
}\\

\vspace{6pt} $^{\rm b}${\em D\'epartement de Math\'ematiques,
Facult\'e des sciences, \break
Universit\'e de Tlemcen, BP 119,  13000-Tlemcen, Alg\'erie.
}\\

\vspace{6pt} $^{\rm c}${\em 
Laboratoire de Physique Quantique de la Mati\`ere \break
et Mod\'elisations Math\'ematiques (LPQ3M)\break
 Universit\'e de Mascara,
29000-Mascara, Alg\'erie.}
\end{center}
\vspace{1cm}
\begin{abstract}
We show that the only polynomial sets with a generating function of the form $F(xt-R(t))$ and satisfying a three-term recursion relation are the monomial set and the rescaled ultraspherical, Hermite, and Chebyshev polynomials of the first kind.

\vspace{6pt}

{\bf Keywords: }Orthogonal polynomials; generating functions;
recurrence relations; ultraspherical polynomials; Chebyshev polynomials; Hermite polynomials.

\vspace{6pt}
{\bf AMS Subject Classification: }Primary 33C45; Secondary 42C05

\end{abstract}

\section{Introduction and main result}

The problem of describing all or just orthogonal polynomials generated by a specific generating function has been investigated by many authors (see for example \cite{Appel, alsalam,alsalam2,anshelev,Bachhaus,Boas, ch1,ch2,Meixner}). For the
special case, where the generating function has the form $F(xt-\alpha t^2)$, the authors in \cite{alsalam}, \cite{Bachhaus} and \cite{Bencheikh} used different methods to show that the orthogonal polynomials are Hermite and ultraspherical polynomials. Recently in \cite{anshelev}, the author gave a motivation of this question and found, even if $F$ is a formal power series, that the orthogonal polynomials are the ultraspherical, Hermite and Chebychev polynomials of the first kind. Moreover, for $F$ corresponding to Chebychev polynomials of the first kind, he showed that these polynomials remain the only orthogonal polynomials with generating function of the form $F(xU(t)-R(t))$, where $U(t)$ and $R(t)$ are formal power series. A natural question, as mentioned in \cite{anshelev}, is to describe (all or just orthogonal) polynomials with generating functions $$F(xU(t)-R(t)).$$

In this paper, we consider the subclass case
$F(xt-R(t))=\sum_{n\geq 0}\alpha_nP_n(x)t^n$ where the polynomial set (not necessary orthogonal) $\{P_n\}_{n\geq 0}$
satisfies a three-term recursion relation. The main result
obtained here is the following:
\begin{theorem}\label{thmm}
Let $F(t)=\sum_{n\geq 0}\alpha_nt^n$ and $R(t)=\sum_{n\geq
1}R_nt^n/n$ be formal power series where $\{\alpha_n\}$ and
$\{R_n\}$ are complex sequences with $\alpha_0=1$ and $R_1=0$.
Define the polynomial set $\{P_n\}_{n\geq 0}$ by
\begin{equation}\label{gf0}
F(xt-R(t))=\sum_{n\geq 0}\alpha_nP_n(x)t^n.
\end{equation}
If this polynomial set (which is automatically monic) satisfies the three-term recursion
relation
\begin{eqnarray}\label{gf6}
\left\{
\begin{array}{l}
xP_n(x)=P_{n+1}(x)+\beta_nP_n(x)+\omega_nP_{n-1}(x),\quad n\geq
0,\\
P_{-1}(x)=0,\;\;P_0(x)=1
\end{array}
\right.
\end{eqnarray} 
where $\{\beta_n\}$ and $\{\omega_n\}$ are complex sequences,
then we have:\\
a) If $R_2=0$ and $\alpha_n\neq 0$ for $n\geq 1$, then $R(t)=0$, $F(t)$ is arbitrary and $F(xt)=\sum_{n\geq 0}\alpha_nx^nt^n$
generates the monomials $\{x^n\}_{n\geq 0}$.\\
b) If $\alpha_1R_2\neq 0,$ then $R(t)=R_2t^2/2$ and the polynomial sets $\{P_n\}_{n\geq 0}$ are the rescaled ultraspherical, Hermite and Chebychev polynomials of the first kind.
 \end{theorem}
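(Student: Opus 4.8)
The plan is to squeeze a differential--difference relation out of the generating function, combine it with the three-term recurrence to force symmetry, read off the recurrence coefficients, and then show that every Taylor coefficient of $R$ beyond the quadratic one vanishes; the quadratic case is finished off by the results already cited.

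First I would differentiate \eqref{gf0}. Writing $G(x,t)=F(xt-R(t))$ and $u=xt-R(t)$, one has $\partial_xG=t\,F'(u)$ and $\partial_tG=(x-R'(t))F'(u)$, whence the single functional identity $t\,\partial_tG=(x-R'(t))\,\partial_xG$. Extracting the coefficient of $t^n$ (using $R_1=0$ and $P_0'=0$) gives, for every $n\ge1$,
\begin{equation}\label{plan-star}
\alpha_n\big(xP_n'(x)-nP_n(x)\big)=\sum_{k=2}^{n}R_k\,\alpha_{n-k+1}\,P'_{n-k+1}(x).
\end{equation}
The cases $n=0,1$ of \eqref{gf6} together with \eqref{gf0} already yield $\beta_0=0$ and $P_1(x)=x$ (this is where $\alpha_1\neq0$ enters). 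Next I would read off leading coefficients in \eqref{plan-star}. An induction shows that, once $\beta_0=\dots=\beta_{n-2}=0$, the coefficient of $x^{n-1}$ in $xP_n'-nP_n$ equals $\beta_{n-1}$, whereas the right-hand side has degree at most $n-2$; hence $\beta_{n-1}=0$, and therefore $\beta_n=0$ for all $n$. The set is thus symmetric, $P_n(-x)=(-1)^nP_n(x)$, and $P_n'$ has the opposite parity. Comparing the coefficient of $x^{n-2}$ in \eqref{plan-star} then produces the clean identity
\begin{equation}\label{plan-omega}
\sum_{j=1}^{n-1}\omega_j=\frac{(n-1)\,R_2\,\alpha_{n-1}}{2\,\alpha_n},
\end{equation}
which determines each $\omega_n$ in terms of $R_2$ and the sequence $\{\alpha_k\}$ \emph{alone}, independently of $R_3,R_4,\dots$. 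Finally a parity count in \eqref{plan-star} eliminates the odd coefficients: the $k$-th summand has parity $(-1)^{n-k}$ while the left side has parity $(-1)^n$, and summands of a given parity are linearly independent (distinct degrees); for $n$ odd the constant summand $k=n$, equal to $R_n\alpha_1$, cannot be balanced, so $R_n=0$ for every odd $n\ge3$ and $R$ is an even series.

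The heart of the matter, and the step I expect to be the main obstacle, is to show that the surviving even coefficients $R_4,R_6,\dots$ all vanish. The mechanism is an over-determination. By \eqref{plan-omega} and $\beta_n=0$, the recurrence \eqref{gf6} fixes each $P_n$ as a polynomial whose coefficients depend \emph{only} on $R_2$ and $\{\alpha_k\}$; on the other hand, expanding \eqref{gf0} directly expresses those same coefficients in terms of \emph{all} the $R_k$. Equating the two, the highest sub-leading coefficient at each level merely assigns a value to the newly appearing $R_n$, but every lower coefficient re-determines the already-fixed $R_k$'s, and the compatibility of these repeated determinations leaves no room for nonzero higher even coefficients. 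I would organize this as an induction on $n$, assuming $R_3=\dots=R_{n-1}=0$ and matching the constant and lower coefficients of $P_n$ computed in the two ways; keeping track of the quadratic-in-$\omega$ terms $b_n=[x^{n-4}]P_n$ generated by the recurrence is the real technical crux, since at the first few levels (e.g. $n=4$) the comparison only pins $R_n$ to a value rather than killing it, and the genuine collapse $R_{2k}=0$ emerges only once the tower of constraints is pushed far enough.

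Once $R(t)=R_2t^2/2$ is established, \eqref{gf0} takes the form $F(xt-\alpha t^2)$ with $\alpha=R_2/2\neq0$, and the identification of $\{P_n\}$ with the rescaled ultraspherical, Hermite and Chebyshev polynomials of the first kind follows from the known analyses of this generating function in \cite{alsalam,Bachhaus,Bencheikh,anshelev}, the admissible profiles for $F$ being exactly those selecting these three families. This completes part (b).
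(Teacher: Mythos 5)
Your preliminary steps are sound and essentially parallel the paper's: the identity \eqref{plan-star} is exactly the paper's Proposition~\ref{prop1} (equation \eqref{gf7}) after reindexing; your leading-coefficient argument for $\beta_n=0$ is a legitimate (and arguably cleaner) alternative to the paper's derivation via the coefficient of $P'_{n+1}$ in \eqref{xgf151}; the parity argument killing the odd $R_k$ matches Corollary~\ref{cor1}; and \eqref{plan-omega} is the summed form of the paper's \eqref{gf10}. Part (a), which you do not treat, is a short induction in the paper and your machinery would handle it.

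The genuine gap is precisely at the point you flag as ``the heart of the matter'': showing that $R_4, R_6,\dots$ all vanish. Your proposed mechanism --- that the recurrence, via $\beta_n=0$ and \eqref{plan-omega}, already determines every coefficient of $P_n$ from $R_2$ and $\{\alpha_k\}$ alone, so that matching against the direct expansion of \eqref{gf0} leaves ``no room'' for nonzero higher even coefficients --- is an unsubstantiated hope, and the paper's own proof shows why it fails as stated. The level-by-level constraints are \emph{not} immediately overdetermined: as you yourself observe at $n=4$, each new level merely assigns a value to the newly appearing $R_{2k}$, and the lower-order compatibility conditions take the form of the relations \eqref{gf11}--\eqref{gf12}, which admit consistent nonzero solutions to any finite order (for instance geometric tails $T_k=ab^k$ satisfy the recursive structure of \eqref{gf12} and are only excluded by a global argument). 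Ruling these out is where the paper spends all of its effort: Corollaries~\ref{cor3}--\ref{cor7} plus a three-case analysis on whether $D_k=T_k^2-T_{k+1}T_{k-1}$ eventually vanishes, requiring singularity counting for rational functions of $n$, explicit solution of a constant-coefficient difference equation in the degenerate subcase $D=3$, and, in the geometric case, closed-form summation via digamma identities and an asymptotic expansion of $a_{2k}$ to reach a contradiction in the limit $k\to\infty$. None of this is level-by-level coefficient matching; it is intrinsically asymptotic. So while your outline correctly identifies where the difficulty sits, the proof of the central claim $R_{2k}=0$ for $k\ge 2$ is missing, and the induction you sketch would not close without importing arguments of the kind the paper actually uses.
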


In the above theorem, let remark that there is no loss of generality in assuming $\alpha_0=1$ and $R_1=0$. Indeed, we can choose the generating function $\gamma_1+\gamma_2F((x+R_1)t-R(t))=\gamma_1+\gamma_2\sum_{n\geq 0}\alpha_nP_n(x+R_1)t^n$ for suitable constants  $\gamma_1$ and $\gamma_2$.

The proof of theorem \ref{thmm} will be given in section \ref{sec3}. For that purpose some preliminary results must be developed  first in section \ref{sec2}. We end the paper by a brief concluding section.

\section{Preliminary results}\label{sec2}

This section contains two propositions and some related corollaries which are the important ingredients used for the proof of Theorem~\ref{thmm}.
\begin{proposition}\label{prop1}
Let $\{P_n\}_{n\geq 0}$ be a monic polynomial set generated by
\eqref{gf0}. Then we have

\begin{equation}\label{gf7}
\alpha_nxP'_n(x)-\sum_{k=1}^{n}R_{k+1}\alpha_{n-k}P'_{n-k}(x)=n\alpha_nP_n(x),\;\;
n\geq 1.
\end{equation}

\end{proposition}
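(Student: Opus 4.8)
The plan is to differentiate the defining generating function relation with respect to $x$ and $t$ separately, then exploit the special functional form $F(xt - R(t))$ to relate the two partial derivatives. Writing $G(x,t) = F(xt - R(t))$, I observe that the argument $u = xt - R(t)$ satisfies the key identity
\begin{equation*}
t\,\partial_x G = t F'(u) \quad\text{and}\quad \partial_t G = (x - R'(t)) F'(u),
\end{equation*}
so that $t\,\partial_t G = (xt - tR'(t)) F'(u) = x\,(t\partial_x G) - tR'(t)\,\partial_x G$. This is the crucial observation: the factor $F'(u)$ cancels, leaving a first-order PDE
\begin{equation*}
t\,\partial_t G = \bigl(x - tR'(t)\bigr)\, t\,\partial_x G / t,
\end{equation*}
which I would write cleanly as $t\,\partial_t G = x\,\partial_x G \cdot t - tR'(t)\,\partial_x G$, i.e. $\partial_t G = x\,\partial_x G - R'(t)\,\partial_x G$ after dividing by $t$ (keeping track of the constant term). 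The point is that this PDE encodes the desired recurrence once both sides are expanded in powers of $t$.

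The key steps, in order, are as follows. First I would expand $\partial_x G = \sum_{n\geq 1}\alpha_n P_n'(x) t^n$ and $\partial_t G = \sum_{n\geq 1} n\alpha_n P_n(x) t^{n-1}$ directly from \eqref{gf0}. Next I would substitute the factored form of the PDE, namely that $t\,\partial_t G$ equals $x$ times $t\,\partial_x G$ minus $tR'(t)$ times $\partial_x G$, using $R(t) = \sum_{n\geq 1} R_n t^n/n$ so that $R'(t) = \sum_{n\geq 1} R_n t^{n-1}$ and $tR'(t) = \sum_{k\geq 1} R_k t^k$. Then the product $tR'(t)\,\partial_x G$ becomes a Cauchy product $\sum_n\bigl(\sum_{k=1}^n R_k \alpha_{n-k} P_{n-k}'(x)\bigr)t^n$, up to index bookkeeping. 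Finally I would equate coefficients of $t^n$ on both sides. The left side $t\,\partial_t G$ contributes $n\alpha_n P_n(x)$ at order $t^n$, the term $x\cdot t\,\partial_x G$ contributes $\alpha_n x P_n'(x)$, and the convolution term contributes the sum $\sum_{k\geq 1} R_{k+1}\alpha_{n-k} P_{n-k}'(x)$ once the index shift from $R'$ is handled. Matching these yields exactly \eqref{gf7}.

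The main obstacle I anticipate is purely bookkeeping: getting the index shifts consistent between $R'(t)$ (which starts at $t^0$ with coefficient $R_1 = 0$) and the convolution against $\partial_x G$, so that the summand ends up as $R_{k+1}\alpha_{n-k}P_{n-k}'(x)$ rather than $R_k\alpha_{n-k}P_{n-k}'(x)$. The appearance of $R_{k+1}$ rather than $R_k$ in \eqref{gf7} is precisely what signals that the differentiation of $R(t)$ has shifted the index by one; I would verify this carefully by writing $tR'(t) = \sum_{m\geq 2} R_m t^m$ (since $R_1 = 0$) and then tracking how multiplying by $\partial_x G = \sum_{j\geq 1}\alpha_j P_j'(x)t^j$ produces, at total degree $n$, terms indexed by $m + j = n$ with $m\geq 2$; reindexing $m = k+1$ gives the stated form. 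A secondary care point is confirming the lowest-order terms ($n=1$) are consistent with $R_1 = 0$ and $\alpha_0 = 1$, which anchors the whole identity.
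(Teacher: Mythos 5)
Your approach is exactly the paper's: eliminate $F'(u)$ between the two partial derivatives of $G(x,t)=F(xt-R(t))$ to obtain a first-order relation, expand both sides in powers of $t$, and compare coefficients of $t^n$. The idea is right and the identity you ultimately state is \eqref{gf7}. However, your write-up carries a persistent factor-of-$t$ inconsistency which, if followed literally, produces the wrong summand. From $\partial_x G=tF'(u)$ and $\partial_t G=(x-R'(t))F'(u)$ the correct relation is
\begin{equation*}
\bigl(x-R'(t)\bigr)\,\partial_x G=t\,\partial_t G,
\end{equation*}
which is the paper's \eqref{gf5}; there is no extra factor of $t$ attached to $\partial_x G$. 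Your version ``$t\,\partial_t G=x\,(t\partial_x G)-tR'(t)\,\partial_x G$'' is not this identity (its right-hand side equals $t\cdot t\,\partial_t G$), and your subsequent coefficient extraction is consistent only if one silently reads $\partial_x G$ wherever you wrote $t\partial_x G$.

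The place where this actually bites is the convolution, which you yourself single out as the crux. The correct convolution is against $R'(t)=\sum_{k\ge 0}R_{k+1}t^{k}$ directly, so the coefficient of $t^n$ in $R'(t)\,\partial_x G$ is $\sum_{k=0}^{n}R_{k+1}\alpha_{n-k}P'_{n-k}(x)$, and the $k=0$ term drops because $R_1=0$, giving exactly the sum in \eqref{gf7}. Your proposed verification instead convolves $tR'(t)=\sum_{m\ge 2}R_m t^m$ with $\sum_{j}\alpha_jP'_j(x)t^j$ and reindexes $m=k+1$ subject to $m+j=n$; that yields $\sum_{k\ge 1}R_{k+1}\alpha_{n-k-1}P'_{n-k-1}(x)$, which is off by one from the stated summand $R_{k+1}\alpha_{n-k}P'_{n-k}(x)$. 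So the index shift you correctly identify as the main obstacle is, as written, resolved incorrectly; dropping the spurious factor of $t$ fixes everything and reduces your argument to the paper's proof verbatim.
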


\begin{proof}
By combining the two derivatives $\frac{\partial W}{\partial x}$ and $\frac{\partial W}{\partial t}$ of   the generating function $W(x,t)=F(xt-R(t))$,
 we obtain
\begin{equation}\label{gf5}
\left(x-R'(t)\right)\frac{\partial W}{\partial
x}=t\frac{\partial W}{\partial t}.
\end{equation}
The substitution of the right-hand side of \eqref{gf0} and
$R'(t)=\sum_{n\geq 0}\ R_{n+1}t^{n}$ in \eqref{gf5} gives
\begin{equation}\label{gf5.1}
\left(x-\sum_{n\geq 0}\ R_{n+1}t^n\right)\sum_{n\geq
0}\alpha_nP'_n(x)t^n=\sum_{n\geq 0}\alpha_nP_n(x)nt^{n}.
\end{equation}
After a resummation procedure in left hand side, namely: $$\left(\sum_{n\geq 0}\
R_{n+1}t^{n}\right)\left(\sum_{n\geq
0}\alpha_nP'_n(x)t^n\right)=\sum_{n\geq
0}\left(\sum_{k=0}^{n}R_{k+1}\alpha_{n-k}P'_{n-k}(x)\right)t^n$$ and a $t^n$  coefficients comparison in \eqref{gf5.1}, the result
\eqref{gf7} of proposition 1 follows.

\end{proof}

\begin{corollary}\label{cor0} 
Let $\{P_n\}_{n\geq 0}$ be a monic polynomial set generated by
\eqref{gf0}. If $\alpha_1R_2\neq 0$ then $\alpha_n\neq 0$ for
$n\geq 2$.
\begin{proof}
In fact suppose that $\alpha_{n_0}=0$ for an $n_0\geq 2$. Then \eqref{gf7} implies that $R_{k+1}\alpha_{n_0-k}=0$ for $k=1,...,n_0-1.$
In particular $R_{2}\alpha_{n_0-1}=0$ for $k=1$ gives $\alpha_{n_0-1}=0$ since $R_2\neq 0$. By induction we arrive at $\alpha_1=0$ which 
contradicts the premise $\alpha_1\neq 0$.
\end{proof}
\end{corollary}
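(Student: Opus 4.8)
The plan is to argue by contradiction, using the identity \eqref{gf7} of Proposition \ref{prop1} together with the elementary fact that the derivatives of a monic family have strictly increasing degrees. Suppose the conclusion fails and let $n_0 \geq 2$ be the smallest index with $\alpha_{n_0}=0$. Setting $n=n_0$ in \eqref{gf7} kills both the term $\alpha_{n_0}xP'_{n_0}(x)$ on the left and the entire right-hand side $n_0\alpha_{n_0}P_{n_0}(x)$, so the identity collapses to
$$\sum_{k=1}^{n_0}R_{k+1}\,\alpha_{n_0-k}\,P'_{n_0-k}(x)=0.$$

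First I would read off the top-degree coefficient of this relation. Since each $P_m$ is monic of degree $m$, its derivative $P'_m$ has degree $m-1$ with leading coefficient $m$ for $m\geq 1$, while $P'_0=0$; in particular the $k=n_0$ summand vanishes outright. The surviving terms $P'_{n_0-k}$, $k=1,\dots,n_0-1$, then have pairwise distinct degrees $n_0-k-1$, the largest being $n_0-2$, attained only by the $k=1$ term $R_2\alpha_{n_0-1}P'_{n_0-1}$. Comparing the coefficient of $x^{n_0-2}$ therefore forces $(n_0-1)R_2\alpha_{n_0-1}=0$, and since $R_2\neq 0$ and $n_0-1\geq 1$ this gives $\alpha_{n_0-1}=0$.

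To close the argument I would play this off against the minimality of $n_0$. If $n_0=2$ then $\alpha_{n_0-1}=\alpha_1=0$, contradicting the hypothesis $\alpha_1 R_2\neq 0$; if $n_0>2$ then $n_0-1\geq 2$ and $\alpha_{n_0-1}=0$ contradicts the choice of $n_0$ as the least vanishing index exceeding $1$. Either way we obtain a contradiction, so $\alpha_n\neq 0$ for every $n\geq 2$.

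I do not anticipate a genuine obstacle; the single delicate point is the degree bookkeeping, namely observing that $P'_0=0$ removes the last summand and that isolating one coefficient already suffices to start the descent. One could instead invoke the full linear independence of the $P'_{n_0-k}$ to conclude $R_{k+1}\alpha_{n_0-k}=0$ for every $k$, but extracting the single top coefficient and appealing to minimality is the most economical route.
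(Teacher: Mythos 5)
Your proof is correct and follows essentially the same route as the paper: both set $\alpha_{n_0}=0$ in \eqref{gf7}, use the degree structure of the $P'_{n_0-k}$ to force $R_2\alpha_{n_0-1}=0$ and hence $\alpha_{n_0-1}=0$, and then descend to the contradiction $\alpha_1=0$. The only cosmetic differences are that the paper invokes the full linear independence of the $P'_{n_0-k}$ (getting $R_{k+1}\alpha_{n_0-k}=0$ for all $k$) where you extract just the top coefficient, and that you phrase the descent via a minimal counterexample rather than an explicit induction.
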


\begin{corollary}\label{cor1}
Let $\alpha_1R_2\neq 0$. If the polynomial set $\{P_n\}$ generated by
\eqref{gf0} is symmetric, then
$$R_{2l+1}=0,\;\;\; \text{ for } l\geq 1.$$
\end{corollary}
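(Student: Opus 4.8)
The plan is to exploit the first-order relation \eqref{gf7} of Proposition~\ref{prop1} together with the parity constraints forced by symmetry. Recall that a symmetric monic set satisfies $P_n(-x)=(-1)^nP_n(x)$ for every $n$; differentiating gives $P'_n(-x)=(-1)^{n+1}P'_n(x)$, so $P'_n$ has parity opposite to that of $n$, while $xP'_n(x)$ has the same parity as $n$. My first step is therefore to substitute $x\to-x$ in \eqref{gf7} and compare with the original identity in order to separate \eqref{gf7} into its two parity components.

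In \eqref{gf7} the terms $\alpha_nxP'_n(x)$ and $n\alpha_nP_n(x)$ both carry the parity of $n$, whereas in the sum the generic term $R_{k+1}\alpha_{n-k}P'_{n-k}(x)$ has parity $(-1)^{n-k+1}$; this agrees with the parity of $n$ precisely when $k$ is odd and is the opposite parity exactly when $k$ is even. Consequently the component of \eqref{gf7} of parity opposite to $n$ is carried entirely by the even-$k$ terms of the sum, and since even and odd polynomials are linearly independent this component must vanish identically:
\[
\sum_{\substack{1\le k\le n\\ k\ \text{even}}} R_{k+1}\,\alpha_{n-k}\,P'_{n-k}(x)=0,\qquad n\ge 1.
\]
Writing $k=2j$ converts the coefficients $R_{k+1}$ into exactly the odd-indexed $R_{2j+1}$ that I want to annihilate.

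To finish, I would fix $l\ge 1$ and specialize this identity to $n=2l+1$. The indices $n-2j=2l+1-2j$ then run through the distinct odd values $2l-1,2l-3,\dots,1$, so the polynomials $P'_{2l+1-2j}$ have the pairwise distinct degrees $2l-2,2l-4,\dots,0$ and are therefore linearly independent. Linear independence forces each coefficient $R_{2j+1}\alpha_{2l+1-2j}$ to be zero; taking $j=l$ yields $R_{2l+1}\alpha_1=0$, and since $\alpha_1\neq 0$ we conclude $R_{2l+1}=0$. (The non-vanishing of the remaining $\alpha$'s, guaranteed by Corollary~\ref{cor0} for indices $\ge 2$, recovers the lower odd coefficients in the same way.)

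The only genuine care needed is the bookkeeping in the parity split: one must check that the $xP'_n$ and $P_n$ terms fall exclusively in the same-parity component, so that the opposite-parity component really is the clean even-$k$ sum above. The secondary point to keep track of is that $P'_{n-k}$ vanishes when $n-k=0$, which is why the top term $k=n$ carries no information and why specializing to $n=2l+1$ — with $\alpha_1\neq 0$ supplying the crucial nonzero coefficient attached to the constant polynomial $P'_1$ — is the efficient choice. Everything else follows at once from the distinctness of the degrees.
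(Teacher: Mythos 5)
Your proof is correct and follows essentially the same route as the paper: substitute $x\to-x$ in \eqref{gf7}, use the symmetry to isolate the even-$k$ part of the sum, and conclude from linear independence of the $P'_{n-k}$ (distinct degrees) that each coefficient $R_{2j+1}\alpha_{n-2j}$ vanishes. Your specialization to $n=2l+1$ so that only $\alpha_1\neq 0$ is needed is a slight streamlining of the paper's appeal to Corollary~\ref{cor0}, but it is the same argument.
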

\begin{proof}
The polynomial set $\{P_n\}$ is symmetric means that $P_n(-x)=(-1)^nP_n(x)$ for $n\geq 0$. The substitution $x\rightarrow -x$ in equation
\eqref{gf7} minus equation \eqref{gf7} itself 
left us with $$(1-(-1)^{k+1})R_{k+1}\alpha_{n-k}=0, \;\; \hbox { for } 1\leq k\leq n-1.$$

So, by Corollary \ref{cor0}, we have $R_{2l+1}=0$, for $ l\geq 1$, and
$R(t)=\sum_{k\geq 1}\frac{R_{2k}}{2k}t^{2k}$.
\end{proof}

\begin{proposition}\label{prop2}
Let $\alpha_1R_2\neq 0$ and define 
\begin{equation}\label{define}
T_k=R_{2k},\;(k\geq 1),\;a_n=\frac{T_1}{2}\frac{\alpha_n}{\alpha_{n+1}}, \;(n\geq
0)\text{ and } c_n=\frac{\alpha_n}{\alpha_{n-1}}\,\omega_n,\;\;(n\geq 1).
\end{equation}
For the monic polynomial set generated by \eqref{gf0} and satisfying \eqref{gf6} we have:\\
a) 

\begin{equation}\label{gf9}
\beta_n=0,\;\;\text{  for } n\geq 0.
\end{equation}
b)
\begin{equation}\label{gf10}
\omega _n=na_n-(n-1)a_{n-1},\;\;\text{ for } n\geq 1.
\end{equation}
c)
\begin{equation}\label{gf11}
\frac{4T_2}{T_1^3}\left(1-\frac{n-3}{n-2}\frac{a_{n-3}}{a_{n}}\right)=\frac{n+1}{a_n}-\frac{2n}{a_{n-1}}+\frac{n-1}{a_{n-2}},\;\;\text{
for } n\geq 3.
\end{equation}
d)

\begin{eqnarray}\label{gf12}
\frac{2}{T_1}\left(a_n-\frac{n-2k-1}{n-2k}a_{n-2k-1}\right)T_{k+1}+\left(\frac{n+2}{n}c_n-\frac{n-2k+1}{n-2k+2}c_{n-2k+1}\right)T_{k}=\nonumber\\
=\sum_{l=1}^{k}\frac{T_{l}T_{k-l+1}}{n-2k+2l},\;\; \text{ for }
k\geq 2 \text{ and } n\geq 2k+1.
\end{eqnarray}
\end{proposition}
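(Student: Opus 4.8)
The plan is to extract all four statements from one source, namely the coefficient-by-coefficient reading of the structure relation \eqref{gf7}, used together with the recurrence \eqref{gf6}. Write $P_n(x)=\sum_{i\ge 0}p_{n,i}x^{n-i}$ with $p_{n,0}=1$. Then the coefficient of $x^{n-i}$ is $(n-i)p_{n,i}$ in $xP_n'(x)$ and $np_{n,i}$ in $nP_n(x)$, while each $P_{n-k}'$ in the sum of \eqref{gf7} reaches degree only $n-k-1$ and so contributes to $x^{n-i}$ only when $k\le i-1$. Reading off the top informative coefficients gives (a) and (b); reading off deeper ones and reconciling them with the recurrence relations satisfied by the $p_{n,i}$ themselves gives (c) and (d).

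For (a) I compare the coefficient of $x^{n-1}$ in \eqref{gf7}: every $P_{n-k}'$ has degree at most $n-2$, so only $\alpha_nxP_n'$ and $n\alpha_nP_n$ contribute and $(n-1)p_{n,1}=np_{n,1}$, forcing $p_{n,1}=0$ for $n\ge1$ (using $\alpha_1\ne0$ and Corollary~\ref{cor0}). Since \eqref{gf6} gives $\beta_n=p_{n,1}-p_{n+1,1}$ for $n\ge1$ and $\beta_0=-p_{1,1}$, we obtain \eqref{gf9}. With $\beta_n=0$ the recurrence forces $P_n(-x)=(-1)^nP_n(x)$, so Corollary~\ref{cor1} applies and the odd $R$'s drop out of \eqref{gf7}, which may now be written with $T_j=R_{2j}$ and $T_1\alpha_{n-1}/\alpha_n=2a_{n-1}$. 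For (b) I next compare the coefficient of $x^{n-2}$: only the $j=1$ (the $T_1$) term of the sum reaches degree $n-2$, giving $-2p_{n,2}=T_1(\alpha_{n-1}/\alpha_n)(n-1)$, i.e. $p_{n,2}=-(n-1)a_{n-1}$; and since \eqref{gf6} gives $\omega_n=p_{n,2}-p_{n+1,2}$, statement \eqref{gf10} follows at once. (The comparisons at odd powers $x^{n-1-2l}$ are vacuous, consistent with the symmetry already established.)

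For (c) and (d) the same reading must be matched against a second relation. Comparing the coefficient of $x^{n-2m}$ in \eqref{gf7} gives
\[-2m\,\alpha_n\,p_{n,2m}=(n-2m+1)\sum_{j=1}^{m}T_j\,\alpha_{n-2j+1}\,p_{n-2j+1,\,2(m-j)},\]
while comparing the coefficient of $x^{n+1-2m}$ in \eqref{gf6} gives $p_{n,2m}-p_{n+1,2m}=\omega_n\,p_{n-1,2m-2}$. Substituting the first identity (and its shift $n\mapsto n+1$) into the second, and eliminating every $p$ in favour of $a_n$, $c_n$ and the $T_j$ through \eqref{gf10}, the definitions \eqref{define}, and the factorization of the ratios $\alpha_{n-2j+1}/\alpha_n$ into products $2a_i/T_1$, produces the identities. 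The first nontrivial case $m=2$, in which $p_{n,4}$ is built from $p_{n,2}$ and a single $T_2$ term, gives \eqref{gf11} after division by $-\tfrac{n-2}{2}\,a_na_{n-1}a_{n-2}$. The general case $m=k+1$ gives \eqref{gf12}; here the convolution $\sum_{l}T_lT_{k-l+1}$ and its denominators $n-2k+2l$ emerge from the interaction of the summation in the coefficient formula with the recurrence term $\omega_n\,p_{n-1,2m-2}$ (whose factor $p_{n-1,2m-2}$ is itself re-expanded by the same formula) and with the rational prefactors $(n-2m+1)$ and $2m$.

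The main obstacle is confined to (d): organizing the general-$m$ bookkeeping so that, after the substitutions, the single sum over $j$ collapses exactly to $\sum_{l=1}^{k}T_lT_{k-l+1}/(n-2k+2l)$ together with the stated weights $\tfrac{n+2}{n}$, $\tfrac{n-2k+1}{n-2k+2}$ and $\tfrac{n-2k-1}{n-2k}$. This is a matter of tracking several index shifts ($n$, $n+1$, $n-2j+1$) and rational prefactors rather than any conceptual difficulty, and the short $m=2$ computation behind \eqref{gf11} is a faithful rehearsal of the same reconciliation.
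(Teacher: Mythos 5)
Your monomial-coefficient reading of \eqref{gf7} is a genuinely different route from the paper's, and it works cleanly for (a), (b) and (c). The coefficient of $x^{n-i}$ in \eqref{gf7} is indeed $-i\alpha_np_{n,i}=(n-i+1)\sum_{k=1}^{i-1}R_{k+1}\alpha_{n-k}p_{n-k,i-k-1}$, which gives $p_{n,1}=0$, $p_{n,2}=-(n-1)a_{n-1}$, and $p_{n,4}=\frac{(n-3)(n-2)}{2}a_{n-1}a_{n-2}-\frac{2(n-3)T_2}{T_1^3}a_{n-1}a_{n-2}a_{n-3}$; I checked that $p_{n,4}-p_{n+1,4}=\omega_np_{n-1,2}$, divided by $-\frac{n-2}{2}a_na_{n-1}a_{n-2}$, is exactly \eqref{gf11}. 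The paper never expands in powers of $x$: it differentiates \eqref{gf6}, combines with \eqref{gf7} to express $xP_n'$ and $P_n$ in the basis $\{P_j'\}$, multiplies by $x$ once more, and arrives at the single relation \eqref{xgf151}, from which \eqref{gf9}--\eqref{gf12} are read off as the vanishing coefficients of $P_{n+1}'$, $P_n'$, $P_{n-2}'$ and $P_{n+1-k}'$. The virtue of that basis is that every coefficient identity automatically has the shape of \eqref{gf12}: two boundary $a$-terms, two boundary $c$-terms, and one $R$-convolution.

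The gap is in (d), and it is not just bookkeeping: the claimed ``exact collapse'' already fails at $m=3$, which should produce \eqref{gf12} with $k=2$. Your formula gives
\[
p_{n,6}=-\frac{(n-5)(n-4)(n-3)}{6}a_{n-1}a_{n-2}a_{n-3}+\frac{2(n-5)(n-4)T_2}{T_1^3}a_{n-1}a_{n-2}a_{n-3}a_{n-4}-\frac{16(n-5)T_3}{3T_1^5}a_{n-1}a_{n-2}a_{n-3}a_{n-4}a_{n-5},
\]
and substituting this, its shift, and $p_{n-1,4}$ into $p_{n,6}-p_{n+1,6}=\omega_np_{n-1,4}$ yields an identity which, however it is normalized, contains cross terms such as $\frac{1}{a_{n-3}a_{n-4}}$ and $\frac{a_n}{a_{n-1}a_{n-4}}$, together with a block carrying no factor $T_2$ or $T_3$ at all, namely $-\frac{(n-5)(n-4)(n-3)}{6}a_{n-1}a_{n-2}a_{n-3}+\frac{(n-4)(n-3)(n-2)}{6}a_na_{n-1}a_{n-2}-\frac{(n-4)(n-3)}{2}\omega_na_{n-2}a_{n-3}$, which is not identically zero in the $a_j$. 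Nothing of this shape occurs in \eqref{gf12} for $k=2$, whose left side is a $T_2,T_3$-linear combination of $a_n$, $a_{n-5}$, $c_n$, $c_{n-3}$ and whose right side is $\frac{T_1T_2}{n-2}+\frac{T_1T_2}{n}$. So the level-$m$ monomial identity is equivalent to \eqref{gf12} only modulo the lower identities \eqref{gf10}, \eqref{gf11} and \eqref{gf12} for smaller $k$, each multiplied by suitable products of $a_j$'s; the reduction that eliminates the interior contributions $p_{n-2j+1,2(m-j)}$ for $1<j<m$ is precisely the content of part (d), and it is asserted rather than performed. The $m=2$ case is not a faithful rehearsal, because there no interior terms exist ($p_{n-1,2}$ is a monomial in the $a$'s). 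To close the gap you would need to carry out that reduction by induction on $m$ --- or switch to the paper's expansion in $\{P_j'\}$, where the collapse is automatic.
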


\begin{proof}
By differentiating \eqref{gf6} we get
\begin{equation}\label{gf13}
xP'_n(x)+P_n(x)=P'_{n+1}(x)+\beta_nP'_n(x)+\omega_nP'_{n-1}.
\end{equation}
Then by making the combinations $n\alpha_nEq\eqref{gf13}+Eq\eqref{gf7}$ and $Eq\eqref{gf7}-\alpha_n
Eq\eqref{gf13}$ we obtain, respectively,

\begin{eqnarray}\label{gf14}
(n+1)\alpha_nxP'_n(x)=n\alpha_n\left(P'_{n+1}(x)+\beta_nP'_n(x)+\omega_nP'_{n-1}(x)\right)+\sum_{k=1}^{n-1}R_{k+1}\alpha_{n-k}P'
_{n-k}(x)
\end{eqnarray}
and
\begin{eqnarray}\label{gf15}
(n+1)\alpha_n
P_n(x)=\alpha_n\left(P'_{n+1}(x)+\beta_nP'_n(x)+\omega_nP'_{n-1}(x)\right)-\sum_{k=1}^{n-1}R_{k+1}\alpha_{n-k}P'
_{n-k}(x).
\end{eqnarray}

Multiplying \eqref{gf15} by $x$ and using \eqref{gf6} in the left-hand side gives
\begin{equation}\label{xgf15}
(n+1)\alpha_n\left(P_{n+1}(x)+\beta_nP_n(x)+\omega_nP_{n-1}(x)\right)=\alpha_n\left(xP_{n+1}'(x)+\beta_n xP_n'(x)+\omega_nxP_{n-1}'(x)\right)-\sum_{k=1}^{n-1}R_{k+1}\alpha_{n-k}xP_{n-k}'(x).
\end{equation}

For the left-hand side (resp. the right-hand side) of \eqref{xgf15} we use \eqref{gf15} (resp. \eqref{gf14}) to get
\begin{eqnarray}\label{xgf1510}
&&\frac{n+1}{n+2}\alpha_n\left(P_{n+2}'(x)+\beta_{n+1}P_{n+1}'(x)+\omega_{n+1}P_{n}'(x)\right)-\frac{n+1}{n+2}\frac{\alpha_n}{\alpha_{n+1}}\sum_{k=1}^{n}R_{k+1}\alpha_{n-k+1}P_{n-k+1}'(x)\nonumber\\
&&+\alpha_n\beta_n\left(P_{n+1}'(x)+\beta_nP_{n}'(x)+\omega_nP_{n-1}'(x)\right)-\beta_n\sum_{k=1}^{n-1}R_{k+1}\alpha_{n-k}P_{n-k}'(x)\nonumber\\
&&+\frac{n+1}{n}\alpha_n\omega_n\left(P_{n}'(x)+\beta_{n-1}P_{n-1}'(x)+\omega_{n-1}P_{n-2}'(x)\right)-\frac{n+1}{n}\frac{\alpha_n}{\alpha_{n-1}}\omega_n\sum_{k=1}^{n-2}R_{k+1}\alpha_{n-k-1}P_{n-k-1}'(x)=\nonumber\\
&&=\frac{n+1}{n+2}\alpha_n\left(P_{n+2}'(x)+\beta_{n+1}P_{n+1}'(x)+\omega_{n+1}P_{n}'(x)\right)+\frac{1}{n+2}\frac{\alpha_n}{\alpha_{n+1}}\sum_{k=1}^{n}R_{k+1}\alpha_{n-k+1}P_{n-k+1}'(x)
\nonumber\\
&&+\frac{n}{n+1}\alpha_n\beta_n\left(P_{n+1}'+\beta_nP_{n}'+\omega_nP_{n-1}'\right)
+\frac{1}{n+1}\beta_n\sum_{k=1}^{n-1}R_{k+1}\alpha_{n-k}P_{n-k}'(x)
\nonumber\\
&&+\frac{n-1}{n}\alpha_n\omega_n\left(P_{n}'(x)+\beta_{n-1}P_{n-1}'(x)+\omega_{n-1}P_{n-2}'(x)\right)
+\frac{1}{n}\frac{\alpha_n}{\alpha_{n-1}}\omega_n\sum_{k=1}^{n-2}R_{k+1}\alpha_{n-k-1}P_{n-k-1}'(x)\nonumber\\
&&-\sum_{k=1}^{n-1}R_{k+1}\frac{n-k}{n-k+1}\alpha_{n-k}\left(P_{n-k+1}'(x)+\beta_{n-k}P_{n-k}'(x)+\omega_{n-k}P_{n-k-1}'(x)\right)
\nonumber\\
&&-\sum_{k=1}^{n-1}\frac{R_{k+1}}{n-k+1}\sum_{l=1}^{n-k-1}R_{l+1}\alpha_{n-k-l}P_{n-k-l}'(x),
\end{eqnarray}
which can be simplified to
\begin{eqnarray}\label{xgf151}
&&-\frac{\alpha_n}{\alpha_{n+1}}\sum_{k=1}^{n}R_{k+1}\alpha_{n-k+1}P_{n-k+1}'(x)+\frac{1}{n+1}\alpha_n\beta_n\left(P_{n+1}'(x)+\beta_nP_{n}'(x)+\omega_nP_{n-1}'(x)\right)\nonumber\\
&&-\frac{n+2}{n+1}\beta_n\sum_{k=1}^{n-1}R_{k+1}\alpha_{n-k}P_{n-k}'(x)
+\frac{2}{n}\alpha_n\omega_n\left(P_{n}'(x)+\beta_{n-1}P_{n-1}'(x)+\omega_{n-1}P_{n-2}'(x)\right)\nonumber\\
&&-\frac{n+2}{n}\frac{\alpha_n}{\alpha_{n-1}}\omega_n\sum_{k=1}^{n-2}R_{k+1}\alpha_{n-k-1}P_{n-k-1}'(x)
+\sum_{k=1}^{n-1}R_{k+1}\frac{n-k}{n-k+1}\alpha_{n-k}(P_{n-k+1}'(x)+\beta_{n-k}P_{n-k}'(x)\nonumber\\
&&+\omega_{n-k}P_{n-k-1}'(x))
+\sum_{k=1}^{n-1}\frac{R_{k+1}}{n-k+1}\sum_{l=1}^{n-k-1}R_{l+1}\alpha_{n-k-l}P_{n-k-l}'(x)=0.
\end{eqnarray}

From \eqref{xgf151}, the coefficient of $P'_{n+1}(x)$ is null, so we get \eqref{gf9} which  means that the polynomial set $\{P_n\}$ is symmetric, see \cite[Theorem 4.3]{ch3}. Therefore, by Corollary \ref{cor1}, the odd part of the $R$-sequence is null and a computation of the coefficients of $P'_{n}(x)$, $P'_{n-2}(x)$ and $\{P'_{n+1-k}(x)\}_{n\geq k\geq 4}$ in \eqref{xgf151} yields
\begin{equation}\label{xgfOmega}
\frac{2}{n}\alpha_n\omega_n=R_2\frac{\alpha_n}{\alpha_{n+1}}\alpha_n-R_2\frac{n-1}{n}\alpha_{n-1},\quad \text{for }n\geq 1,
\end{equation}
\begin{equation}\label{xgf153}
\frac{2}{n}\alpha_n\omega_n\omega_{n-1}=R_{4}\frac{\alpha_n}{\alpha_{n+1}}\alpha_{n-2}+R_{2}\frac{n+2}{n}\frac{\alpha_n}{\alpha_{n-1}}\alpha_{n-2}\omega_n-R_{4}\frac{n-3}{n-2}\alpha_{n-3}-R_{2}\frac{n-1}{n}\alpha_{n-1}\omega_{n-1}
-\frac{R_{2}^2}{n}\alpha_{n-2},\quad \text{for }n\geq 3,
\end{equation}
and
\begin{eqnarray}\label{xgf156}
R_{k+1}\left(\frac{\alpha_n}{\alpha_{n+1}}-\frac{n-k}{n-k+1}\frac{\alpha_{n-k}}{\alpha_{n-k+1}}\right)+R_{k-1}\left(\frac{n+2}{n}\frac{\alpha_n}{\alpha_{n-1}}\,\omega_n
-\frac{n-k+2}{n-k+3}\frac{\alpha_{n-k+2}}{\alpha_{n-k+1}}\omega_{n-k+2}\right)\nonumber\\
=\sum_{l=1}^{k-2}\frac{R_{k-l}R_{l+1}}{n-k+l+2},\;\;n\geq k\geq 5.
\end{eqnarray}
respectively.\\
Finally, by using the notations \eqref{define}, substituting for $\omega_n$ from \eqref{xgfOmega} into \eqref{xgf153} and by shifting $(k,l)\rightarrow(2k+1,2l-1)$ in \eqref{xgf156} we obtain \eqref{gf10}, \eqref{gf11} and \eqref{gf12}.

\end{proof}

In the following corollaries we adopt the same conditions and notations of Proposition \ref{prop2}.

\begin{corollary}\label{cor2}
If $T_2=0$ then $R(t)=T_1t^2/2$. In this case, the polynomials
generated by $F(xt-T_1t^2/2)$ and satisfying (\ref{gf6}) reduce to the rescaled ultraspherical, Hermite and Chebychev polynomials
of the first kind.
  \end{corollary}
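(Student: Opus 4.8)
The plan is to exploit the two relations of Proposition~\ref{prop2} that remain active once $T_2=0$: the recurrence \eqref{gf11}, which determines the sequence $\{a_n\}$, and the recurrence \eqref{gf12}, which forces the surviving higher $R$-coefficients to vanish. First I would set $T_2=0$ in \eqref{gf11}; writing $b_n=1/a_n$ it collapses to
\[
(n+1)b_n-2nb_{n-1}+(n-1)b_{n-2}=0,\qquad n\geq 3.
\]
This is integrated by observing that $w_n:=(n+1)b_n-nb_{n-1}$ satisfies $w_n=w_{n-1}$ for $n\geq 3$, hence is a constant $C$; telescoping $(n+1)b_n-nb_{n-1}=C$ once more gives $(n+1)b_n=2b_1+(n-1)C$, that is
\[
a_n=\frac{n+1}{Cn+D},\qquad n\geq 1,\quad D:=2b_1-C,
\]
with $(C,D)\neq(0,0)$, since $a_n$ is finite and nonzero by Corollary~\ref{cor0}.

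Next I would prove by induction on $k$ that $T_k=0$ for all $k\geq 2$, the base case $k=2$ being the hypothesis. Assuming $T_2=\cdots=T_k=0$, every product $T_lT_{k-l+1}$ on the right of \eqref{gf12} vanishes (for $1\leq l\leq k$ one index lies in $\{2,\dots,k\}$) and the $T_k$-term on the left drops out, so \eqref{gf12} reduces to
\[
\frac{2}{T_1}\left(a_n-\frac{n-2k-1}{n-2k}\,a_{n-2k-1}\right)T_{k+1}=0,\qquad n\geq 2k+1.
\]
Substituting the explicit $a_n=(n+1)/(Cn+D)$ and clearing denominators, a comparison of the two sides as rational functions of $n$ shows the bracketed factor can vanish identically only if $C=D=0$, which is excluded; hence the factor is nonzero for some admissible $n$ and $T_{k+1}=0$. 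Therefore $R_{2k}=0$ for $k\geq 2$, and combined with the vanishing of the odd coefficients (Corollary~\ref{cor1}) this yields $R(t)=R_2t^2/2=T_1t^2/2$, the first assertion.

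For the identification I would substitute $a_n=(n+1)/(Cn+D)$ into \eqref{gf10} to obtain
\[
\omega_n=na_n-(n-1)a_{n-1}=\frac{n\bigl(C(n-1)+2D\bigr)}{(Cn+D)\bigl(C(n-1)+D\bigr)},\qquad n\geq 1.
\]
Because $\beta_n=0$ by \eqref{gf9}, the monic set $\{P_n\}$ is completely determined by $\{\omega_n\}$, so it suffices to match this formula against the recurrence coefficients of the classical families, splitting on $(C,D)$. If $C=0$ (so $D\neq 0$) then $\omega_n=2n/D$, the coefficients of the rescaled monic Hermite polynomials. If $C\neq 0$ and $D=0$ then $\omega_1=a_1=2/C$ while $\omega_n=1/C$ for $n\geq 2$, the signature of the rescaled monic Chebyshev polynomials of the first kind. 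If $CD\neq 0$, putting $\lambda=D/C$ gives $\omega_n=\dfrac{n(n+2\lambda-1)}{C(n+\lambda)(n+\lambda-1)}$, the coefficients of the rescaled monic ultraspherical polynomials with parameter $\lambda$. These three cases exhaust $(C,D)\neq(0,0)$ and prove the claim.

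I expect the main obstacle to be the inductive step: one must verify carefully that $a_n-\frac{n-2k-1}{n-2k}a_{n-2k-1}$ genuinely fails to vanish for the explicit $a_n$, since only then can $T_{k+1}=0$ be deduced. The secondary point of care is in the normalizations, so that the computed $\omega_n$ are recognized as \emph{rescaled} versions of the three classical families; in particular the Chebyshev case $D=0$ must be isolated because the generic ultraspherical closed form degenerates to $0/0$ at $n=1$ and the value $\omega_1$ has to be read directly from \eqref{gf10}.
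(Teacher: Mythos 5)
Your proposal is correct, and it reorganizes the argument in two ways worth noting. For the vanishing of $T_k$, $k\geq 3$, the paper also inducts on $k$ using \eqref{gf12}, but it simply evaluates at $n=2k+1$, where the factor $\frac{n-2k-1}{n-2k}a_{n-2k-1}$ is zero, so the equation collapses to $2a_{2k+1}T_{k+1}/T_1=0$ and only $a_{2k+1}\neq 0$ (Corollary~\ref{cor0}) is needed; you instead invoke the explicit formula $a_n=(n+1)/(Cn+D)$ and argue the bracket is not identically zero as a rational function of $n$ — valid, but the closed form is not actually needed there (and at $n=2k+1$ the index $n-2k-1=0$ falls outside the range where \eqref{gf11} pins down $a_n$, which your zero prefactor happens to absorb). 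The more substantive divergence is in the identification step: the paper converts \eqref{solricati} into the ratio $\alpha_n/\alpha_{n-1}$, sums the series to get $F$ explicitly as $(1-\lambda_2 t)^{-\lambda_1/\lambda_2}$, $\ln$, or $\exp$, and matches generating functions of the classical families, whereas you bypass $F$ entirely, compute $\omega_n=na_n-(n-1)a_{n-1}$ from \eqref{gf10}, and match recurrence coefficients, using that a monic set with $\beta_n=0$ is uniquely determined by $\{\omega_n\}$. Your route is shorter and makes the trichotomy on $(C,D)$ transparent (including the correct isolation of the Chebyshev case $D=0$ via $\omega_1=2\omega_n$); the paper's route additionally produces the explicit generating functions \eqref{sol1}--\eqref{sol3}, which is information the authors reuse. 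Both arguments are complete; the only point you leave implicit is that $Cn+D\neq 0$ for $n\geq 1$ (equivalently $\lambda=D/C$ is not a negative integer), which follows from $a_n$ being finite and is the analogue of the paper's remark after \eqref{sol1}.
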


\begin{proof} We will use \eqref{gf12} and proceed by induction on
$k$ to show that $T_k=0$ for $k\geq 3$. Indeed $k=2$ and $n=5$
in \eqref{gf12} leads to $2a_5T_3/T_1=0$ and since $a_n\neq 0$ by Corollary \ref{cor0} we get $T_3=0$.
Suppose that $T_3=T_4=\ldots=T_k=0$. Then for $n=2k+1$ the
equation \eqref{gf12} gives $2a_{2k+1}T_{k+1}/T_1=0$ and finally $T_{k+1}=0$. Accordingly, $R(t)=T_1t^2/2$ and the generating function \eqref{gf0} takes the form $F\left(xt-T_1t^2/2\right)$.

Actually to determine $F(t)$, we make use of \eqref{gf11} with $T_1\neq 0$ and $T_2=0$ :
\begin{equation}\label{ricati}
\frac{n+1}{a_n}-\frac{2n}{a_{n-1}}+\frac{n-1}{a_{n-2}}=0, \quad
n\geq 3.
\end{equation}
By summing twice in \eqref{ricati} we find
\begin{equation}\label{solricati}
\frac{n+1}{a_n}=\left(\frac{3}{a_2}-\frac{2}{a_1}\right)n+\left(\frac{4}{a_1}-\frac{3}{a_2}\right),
\quad n\geq 3,
\end{equation}
which is valid for $n=1$ and $n=2$. As $a_n=\frac{T_1}{2}\frac{\alpha_n}{\alpha_{n+1}}$ and using \eqref{solricati} we get, for  $n\geq 2$,
\begin{equation}\label{alphan}
\alpha_{n}=\frac{\lambda_2(n-1)+\lambda_1}{n}\alpha_{n-1}=\frac{\prod_{j=1}^{n-1}(\lambda_1+j\lambda_2)}{n!}\alpha_1.
\end{equation}
And the generating function reads
$$F(t)=1+\alpha_{1}t+\alpha_1\sum_{n\geq
2}\frac{\prod_{j=1}^{n-1}(\lambda_1+j\lambda_2)}{n!}t^n,$$
where $\lambda_1=4\alpha_2/\alpha_1-3\alpha_3/\alpha_2$ and
$\lambda_2=3\alpha_3/\alpha_2-2\alpha_2/\alpha_1$.

Now, by the same ideas as in \cite{anshelev} we have:

i) If $\lambda_1\neq 0$, $\lambda_2\neq 0$, then
\begin{equation}\label{sol1}
F(t)=1+\frac{\alpha_{1}}{\lambda_1}\left(\frac{1}{(1-\lambda_2 t)^{\lambda_1/\lambda_2}}-1\right).
\end{equation}
According to \eqref{alphan}, the ratio
$\lambda:=\lambda_1/\lambda_2$ can not be a negative integer and consequently the
monic polynomials ${P_n}$ can be written as
\begin{equation}\label{sol1ultra}
P_n(x)=\left(\frac{2T_1}{\lambda_2}\right)^{n/2}\textbf{C}_{n}^{\lambda}\left(\sqrt{\frac{\lambda_2}{2T_1}}x\right),
\end{equation}
where $\textbf{C}_{n}^{\lambda}(x)$ are the monic ultraspherical
polynomials defined by \cite[Section 9.8.1]{koekoek}
$$\frac{1}{(1-2xt+t^2)^{\lambda}}=\sum_{n\geq
0}\frac{(-2)^n(\lambda)_{n}}{n!}\textbf{C}_{n}^{\lambda}(x)t^n.$$

ii) If $\lambda_1=0$, $\lambda_2=2\alpha_2/\alpha_1$, then
\begin{equation}\label{sol2}
F(t)=1+\frac{\alpha_{1}}{\lambda_2}\ln\left(\frac{1}{1-\lambda_2 t}\right).
\end{equation}
This later function generates
\begin{equation}\label{sol2Chebichev}
P_n(x)=\left(\frac{2T_1}{\lambda_2}\right)^{n/2}\textbf{T}_{n}\left(\sqrt{\frac{\lambda_2}{2T_1}}x\right),
\end{equation}
where $\textbf{T}_n$ are the monic Chebyshev polynomials defined by  \cite[p. 155]{ch3}
$$1+\frac{1}{2}\ln\left(\frac{1}{1-2xt+t^2}\right)=1+\sum_{n=
1}^{\infty}\frac{2^{n-1}}{n}\textbf{T}_{n}(x)t^n.$$

iii) If $\lambda_1=2\alpha_2/\alpha_1$,
$\lambda_2=0$, the generating function reads
\begin{equation}\label{sol3}
F(t)=1+\frac{\alpha_{1}}{\lambda_1}\left(e^{\lambda_1t}-1\right)
\end{equation}
and the polynomials $P_n$ take the form
\begin{equation}\label{sol3Hermite}
P_n(x)=\left(\frac{T_1}{\lambda_1}\right)^{n/2}\textbf{H}_{n}\left(\sqrt{\frac{\lambda_1}{T_1}}x\right),
\end{equation}
where $\textbf{H}_n$ are the monic Hermite polynomials defined by \cite[Section 9.15]{koekoek}
$$\exp(xt-t^2/2)=\sum_{n= 0}^{\infty}\textbf{H}_{n}(x)\frac{t^n}{n!}.$$

\end{proof}

\begin{remark}\label{remark3}
Suppose $\lambda_1$ and $\lambda_2$ are real numbers then the orthogonality of these polynomials requires  $w_n>0$ for all $n\geq 1$, where
\begin{equation}
\omega_n=\frac{{ T_1}}{2}\,{\frac {n\, \left( {\lambda_2}\,(n-1) +2\,{\lambda_1}\right) }{ \left( {\lambda_2}\,n+{\lambda_1} \right) 
 \left( {\lambda_2}\,(n-1)+{\lambda_1} \right) }}.
\end{equation}

Then it is enough to  assume $\lambda_2/T_1>0$ and $\lambda>-1/2$, $\lambda_2/T_1>0$ and $\lambda_1/T_1>0$ for the ultraspherical, Chebyshev and Hermite polynomials cases, respectively.
\end{remark}

\begin{corollary}\label{cor3}
If $T_{\kappa}=T_{\kappa+1}=0$ for some $\kappa\geq 3$, then
$T_2=0$.
\end{corollary}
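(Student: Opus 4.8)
The plan is to read the statement as an induction on $\kappa$, driven entirely by the relation \eqref{gf12} of Proposition \ref{prop2} specialized to $k=\kappa$. First I would substitute $k=\kappa$ into \eqref{gf12}. Because $T_\kappa=T_{\kappa+1}=0$, every summand on the left-hand side carries a factor of either $T_{k+1}=T_{\kappa+1}$ or $T_k=T_\kappa$, while the coefficients multiplying them are finite: the quantities $a_n,c_n$ are well defined since $\alpha_n\neq 0$ for $n\geq 2$ by Corollary \ref{cor0}, and the denominators $n-2\kappa$ and $n-2\kappa+2$ are strictly positive once $n\geq 2\kappa+1$. Hence the whole left-hand side vanishes and we are left with
\[
\sum_{l=1}^{\kappa}\frac{T_l\,T_{\kappa-l+1}}{n-2\kappa+2l}=0,\qquad n\geq 2\kappa+1.
\]

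Next I would strip off the vanishing end terms: the $l=1$ and $l=\kappa$ summands are $T_1T_\kappa/(n-2\kappa+2)$ and $T_\kappa T_1/n$, both zero since $T_\kappa=0$. This reduces the identity to $\sum_{l=2}^{\kappa-1}\frac{T_l\,T_{\kappa-l+1}}{n-2\kappa+2l}=0$ for every integer $n\geq 2\kappa+1$. The decisive observation is that the left-hand side is a rational function of $n$ whose only poles are the simple, pairwise distinct points $n=2\kappa-2l$ with $l=2,\dots,\kappa-1$ (ranging over $\{2,4,\dots,2\kappa-4\}$), and it vanishes at infinitely many integers; since a nonzero rational function has only finitely many zeros, it must vanish identically. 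Taking the residue at the pole $n=2\kappa-4$, which comes solely from the $l=2$ term, then forces $T_2\,T_{\kappa-1}=0$.

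Finally I would carry out the induction on $\kappa\geq 3$. In the base case $\kappa=3$ the reduced sum consists of the single term $l=2$, namely $T_2^2/(n-2)$, so $T_2^2=0$ and hence $T_2=0$. For $\kappa\geq 4$, the relation $T_2\,T_{\kappa-1}=0$ leaves two possibilities: either $T_2=0$, in which case we are done, or $T_{\kappa-1}=0$; in the latter case $T_{\kappa-1}=T_\kappa=0$ is again a pair of consecutive vanishing coefficients with index $\kappa-1\geq 3$, so the induction hypothesis gives $T_2=0$.

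The step I expect to be the main obstacle is the rational-function argument: one must justify carefully that a finite sum of simple fractions with pairwise distinct poles, vanishing at infinitely many values of $n$, must have every numerator equal to zero (equivalently, that the residue at each pole vanishes). Once $T_2\,T_{\kappa-1}=0$ is established, the remaining descent is automatic, and the base case $\kappa=3$ already delivers $T_2=0$ outright.
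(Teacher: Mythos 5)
Your argument is correct and follows the paper's own proof essentially verbatim: specialize \eqref{gf12} to $k=\kappa$ so the left-hand side vanishes, extract $T_lT_{\kappa-l+1}=0$ by the residue/pole argument on the resulting rational function of $n$, and then descend from $T_2T_{\kappa-1}=0$ down to $T_2=0$. Your phrasing as an induction on $\kappa$ with base case $\kappa=3$ is just a tidier packaging of the paper's "repeat the same procedure until we reach $T_2=0$" contradiction.
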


\begin{proof}

Let $k=\kappa$ in \eqref{gf12}. Then for $n\geq 2\kappa +1$ the
fraction $\sum_{l=1}^{\kappa}\frac{T_lT_{\kappa-l+1}}{n-2\kappa+2l}$, as function of integer $n$, is
null even for real $n$. Multiplying by $n-2\kappa+2l$ and tends $n$ to $2\kappa-2l$ we find
$T_lT_{\kappa-l+1}=0$ for $1\leq l \leq \kappa$ which is $T_2T_{\kappa-1}=0$ when $l=2$. Supposing  $T_2\neq 0$ leads to
$T_{\kappa-1}=0$. So $T_{\kappa-1}=T_{\kappa}=0$ and with the
same procedure we find $T_{\kappa-2}=0$. Going so on till we arrive at $T_2=0$ which contradicts $T_2\neq 0$.
\end{proof}
\begin{corollary}\label{cor4}
 If $R(t)$ is a polynomial then $R(t)=T_1t^2/2$.
 \end{corollary}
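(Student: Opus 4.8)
The plan is to reduce everything to the two preceding corollaries, which already isolate the two relevant triggers: Corollary~\ref{cor2} turns the single vanishing condition $T_2=0$ into the desired conclusion $R(t)=T_1t^2/2$, while Corollary~\ref{cor3} manufactures $T_2=0$ out of any pair of consecutive vanishing coefficients $T_\kappa=T_{\kappa+1}=0$ with $\kappa\geq 3$. So it suffices to produce such a consecutive vanishing pair from the hypothesis that $R$ is a polynomial.

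First I would record the shape of $R$. Under the standing assumptions of Proposition~\ref{prop2} we have $\alpha_1R_2\neq 0$ and, since the set $\{P_n\}$ is symmetric (established in the proof of Proposition~\ref{prop2} via $\beta_n=0$), Corollary~\ref{cor1} forces the odd coefficients of $R$ to vanish, so $R(t)=\sum_{k\geq 1}\frac{T_k}{2k}t^{2k}$ with $T_1=R_2\neq 0$. Saying that $R$ is a polynomial therefore means exactly that the set $\{k\geq 1:\,T_k\neq 0\}$ is finite; it is also nonempty because $T_1\neq 0$.

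Next I would argue by contradiction toward Corollary~\ref{cor2}. Suppose $R(t)\neq T_1t^2/2$; then by Corollary~\ref{cor2} we must have $T_2\neq 0$, so the finite set $\{k:\,T_k\neq 0\}$ contains $2$ and has a maximum $K:=\max\{k:\,T_k\neq 0\}\geq 2$. By maximality $T_{K+1}=T_{K+2}=0$, and since $K+1\geq 3$ I may invoke Corollary~\ref{cor3} with $\kappa=K+1$, concluding $T_2=0$ --- contradicting $T_2\neq 0$. Hence $T_2=0$, and one final application of Corollary~\ref{cor2} gives $R(t)=T_1t^2/2$.

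There is essentially no analytic obstacle here: all the hard work sits in Corollaries~\ref{cor2} and~\ref{cor3}, which rest on the recursion identities of Proposition~\ref{prop2}. The only point requiring care is the bookkeeping on indices --- ensuring the consecutive vanishing pair one feeds to Corollary~\ref{cor3} starts at an index $\geq 3$. Taking the top nonzero index $K$ and using the pair $(K+1,K+2)$ handles the borderline case $K=2$ uniformly, since then $\kappa=K+1=3$ is still admissible.
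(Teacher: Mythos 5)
Your proof is correct and follows essentially the same route as the paper: extract two consecutive vanishing coefficients $T_{\kappa}=T_{\kappa+1}=0$ with $\kappa\geq 3$ from the polynomial hypothesis, feed them to Corollary~\ref{cor3} to get $T_2=0$, and conclude via Corollary~\ref{cor2}. Your extra bookkeeping (taking the maximum nonzero index and framing it as a contradiction to guarantee $\kappa\geq 3$) only tidies up a point the paper glosses over, since it states the hypothesis with ``some $\kappa\geq 2$'' while Corollary~\ref{cor3} requires $\kappa\geq 3$.
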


\begin{proof}
If $R(t)$ is a polynomial then for some $\kappa\geq 2$, $T_k=0$
whenever $k\geq \kappa$. By
Corollary~\ref{cor3}, since $T_{\kappa}=T_{\kappa+1}=0$, we conclude that $T_2=0$ and  by
Corollary~\ref{cor2} that $T_k=0$ for $k\geq 3$.
\end{proof}

\begin{corollary}\label{cor5}
 If $a_n$ is a rational fraction of $n$ then $T_2=0$.
\end{corollary}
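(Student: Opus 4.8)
The plan is to deduce $T_2=0$ from the single relation \eqref{gf11} by turning it into an identity between rational functions of $n$ and comparing their behaviour at infinity. First I would set $C=4T_2/T_1^3$ and introduce $g(n)=(n+1)/a_n$; since $a_n$ is a rational fraction of $n$ and $a_n\neq0$ by Corollary \ref{cor0}, $g$ is a rational function of $n$. The key observation is that \eqref{gf11} can be written entirely in terms of $g$. Indeed $\frac{n+1}{a_n}=g(n)$, $\frac{n}{a_{n-1}}=g(n-1)$ and $\frac{n-1}{a_{n-2}}=g(n-2)$, so the right-hand side of \eqref{gf11} is the second difference $g(n)-2g(n-1)+g(n-2)$; and writing $a_{n-3}=(n-2)/g(n-3)$ and $1/a_n=g(n)/(n+1)$ turns the left-hand side into $C\left(1-\frac{(n-3)g(n)}{(n+1)g(n-3)}\right)$. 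Thus \eqref{gf11} reads
$$C\left(1-\frac{(n-3)g(n)}{(n+1)g(n-3)}\right)=g(n)-2g(n-1)+g(n-2),\qquad n\geq 3.$$

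Both sides are rational functions of $n$ agreeing for every integer $n\geq3$, hence they coincide as rational functions, and I would compare their Laurent expansions as $n\to\infty$. Let $m$ be the degree of $g$ at infinity, so $g(n)\sim An^m$ with $A\neq0$. The left-hand side tends to $0$ because $\frac{n-3}{n+1}\to1$ and $\frac{g(n)}{g(n-3)}\to1$. If $m\geq2$, the polynomial part of $g$ has degree $\geq2$, so its second difference is a nonzero polynomial of degree $m-2\geq0$ and the right-hand side does not tend to $0$, a contradiction. Hence $m\leq1$, and then $g=L+\rho$ with $L$ a polynomial of degree $\leq1$ (killed by the second difference) and $\rho$ a proper rational function with $\rho(n)=O(1/n)$. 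Since the second difference of $n^{-k}$ is $O(n^{-k-2})$ for $k\geq1$, the second difference of $\rho$ is $O(1/n^3)$, so the right-hand side is $O(1/n^3)$ and its $1/n$-coefficient vanishes.

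It then remains to read off the $1/n$-coefficient of the left-hand side. Using $\frac{n-3}{n+1}=1-\frac{4}{n}+O(1/n^2)$ together with $\frac{g(n)}{g(n-3)}=1+\frac{3m}{n}+O(1/n^2)$, one gets $\frac{(n-3)g(n)}{(n+1)g(n-3)}=1+\frac{3m-4}{n}+O(1/n^2)$, so the left-hand side equals $\frac{C(4-3m)}{n}+O(1/n^2)$. Matching the $1/n$-coefficients forces $C(4-3m)=0$; since $m\leq1$ gives $4-3m\geq1>0$, we conclude $C=0$, that is $T_2=0$ (recall $T_1=R_2\neq0$).

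The step I expect to be the main obstacle is the bookkeeping in the two asymptotic expansions above: one must check that the $1/n$-coefficient of $\frac{g(n)}{g(n-3)}$ is exactly $3m$, independently of the lower-order data of $g$, so that the $1/n$-coefficient of the left-hand side depends only on $m$; and that the second difference of the proper part $\rho$ is genuinely $O(1/n^3)$ rather than $O(1/n)$. Both come out of expanding $g(n)=An^m(1+c_1/n+\cdots)$ and verifying that the $c_1$-terms cancel in the ratio $g(n)/g(n-3)$, and from the elementary estimate on the second difference of $n^{-k}$; once these are secured the coefficient comparison closes the argument.
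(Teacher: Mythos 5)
Your proof is correct, but it takes a genuinely different route from the paper's. You work only with the single relation \eqref{gf11}: after the substitution $g(n)=(n+1)/a_n$ (legitimate since $a_n\neq 0$ by Corollary \ref{cor0}, so $g$ is a nonzero rational function), the identity becomes an equality of rational functions of $n$ whose right-hand side is the second difference $g(n)-2g(n-1)+g(n-2)$; comparing Laurent expansions at infinity first forces the degree $m$ of $g$ to be at most $1$ (so the second difference is $O(n^{-3})$ and its $1/n$-coefficient vanishes), and then yields $\frac{4T_2}{T_1^3}(4-3m)=0$, whence $T_2=0$ since $4-3m\geq 1$ for $m\leq 1$ (in fact $4-3m\neq 0$ for every integer $m$). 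The two technical points you flag do check out: in $g(n)/g(n-3)=1+\frac{3m}{n}+O(n^{-2})$ the subleading coefficient of $g$ cancels, and $\Delta^2(1/n)=\frac{2}{n(n-1)(n-2)}=O(n^{-3})$ handles the proper part. The paper argues differently: it uses the whole family \eqref{gf12} indexed by $k\geq 2$, bounds the number of singularities of the left-hand side independently of $k$, deduces $T_lT_{k-l+1}=0$ for all large $k$, extracts $T_{k_0}=T_{k_0+1}=0$, and concludes via Corollary \ref{cor3}. Your argument is more economical (one equation, no singularity count, no appeal to Corollary \ref{cor3}); the paper's scheme has the advantage of being reusable at the end of Case 1 in the proof of Theorem \ref{thmm}, where $a_n$ carries a $(-1)^n$ component and is rational only along each parity class --- a setting where your single-equation argument, which mixes the indices $n,n-1,n-2,n-3$ of both parities, would not transfer verbatim.
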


\begin{proof} Observe that $c_n=T_1\left(na_n/a_{n-1}-(n-1)\right)/2$ will also a rational fraction of $n$. Then it follows that, in \eqref{gf12}, two fractions are equal for natural numbers $n\geq 2k+1$, $k\geq 2$ and extensively will be for real numbers $n$. If we denote by $N_s(F(x))$ the number of singularities of a rational fraction
$F(x)$ then we can easily verify, for all rational fractions  $F$ and $\tilde{ F}$ of $x$ and a constant $a\neq 0$, that:

a) $N_s(F(x+a))=N_s(F(x))$,

b) $N_s(aF(x))=N_s(F(x))$,

c) $N_s(F(x)+\tilde{ F}(x))\leq N_s(F(x))+N_s(\tilde{ F}(x))$.

Using  property a) of $N_s$ we have $$N_s\left(\frac{n-2k-1}{n-2k}a_{n-2k-1}\right)=N_s\left(\frac{n}{n+1}a_{n}\right) \text{ and } N_s\left(\frac{n-2k+1}{n-2k+2}c_{n-2k+1}\right)=N_s\left(\frac{n}{n+1}c_{n}\right).$$ 
According to properties b) and c) of $N_s$, the $N_s$ of the left-hand side of \eqref{gf12} is finite and independent of $k$. Thus, the right-hand side of \eqref{gf12} has a finite number  of
singularities which is independent of $k$. As consequence there exists a $k_0$ for which $T_lT_{k-l+1}=0$
 for all $k \geq k_0$  and $k_0\leq l\leq k$. Taking successively  $k=k_0=l$ and  $k=k_0+1=l$ we get
$T_{k_0}=T_{k_0+1}=0$. Then, by Corollary~\ref{cor3} we have $T_2=0$.

\end{proof}

\begin{remark}
The fact that $a_n$ is a rational fraction of $n$ means that $F(z)=\sum_{n\geq
0}\alpha_nz^n$ is a series of hypergeometric type.
\end{remark}
\begin{corollary}\label{cor6}
If $T_{\kappa}=T_m=0$ for some $\kappa\neq m\geq 3$, then
$T_2=0$.
\end{corollary}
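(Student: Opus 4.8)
The plan is to suppose $T_2\neq 0$ and derive a contradiction, reducing everything either to Corollary \ref{cor3} (two \emph{consecutive} vanishing coefficients) or to Corollary \ref{cor5} (rationality of $a_n$). Assume without loss of generality that $\kappa<m$. First I would dispose of the ``adjacent'' situations: if any of $T_{\kappa-1},T_{\kappa+1},T_{m-1},T_{m+1}$ vanishes, then together with $T_\kappa=0$ or $T_m=0$ we get two consecutive null coefficients of index $\ge 2$ (and if that adjacent index equals $2$ we are already done), so Corollary \ref{cor3} forces $T_2=0$, a contradiction. Hence it suffices to treat the generic case $m\ge\kappa+2$ with $T_{\kappa\pm1}\neq0$ and $T_{m\pm1}\neq0$.

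In that case I would specialise \eqref{gf12} twice. Taking $k=\kappa$ the term carrying $T_\kappa$ drops out, and since $T_{\kappa+1}\neq0$ we may divide to obtain
\begin{equation}\label{recA}
a_n-\frac{n-2\kappa-1}{n-2\kappa}\,a_{n-2\kappa-1}=\frac{T_1}{2T_{\kappa+1}}\sum_{l=1}^{\kappa}\frac{T_lT_{\kappa-l+1}}{n-2\kappa+2l},\qquad n\ge 2\kappa+1,
\end{equation}
and likewise, taking $k=m$,
\begin{equation}\label{recB}
a_n-\frac{n-2m-1}{n-2m}\,a_{n-2m-1}=\frac{T_1}{2T_{m+1}}\sum_{l=1}^{m}\frac{T_lT_{m-l+1}}{n-2m+2l},\qquad n\ge 2m+1.
\end{equation}
Thus the sequence $(a_n)$ obeys two inhomogeneous linear difference equations whose right-hand sides are explicit rational functions of $n$ and whose shifts $2\kappa+1$ and $2m+1$ are \emph{distinct odd} integers.

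The heart of the argument, and the step I expect to be the main obstacle, is to convert this over-determination into rationality of $a_n$. Along a fixed residue class modulo $2\kappa+1$ the homogeneous part of \eqref{recA} telescopes into a ratio of step-shifted Pochhammer symbols, so by the Gamma-function asymptotics any nonzero homogeneous solution decays like $n^{-1/(2\kappa+1)}$; similarly \eqref{recB} would force the decay $n^{-1/(2m+1)}$. Since $2\kappa+1\neq 2m+1$, a sequence solving \emph{both} homogeneous equations must vanish identically, so the inhomogeneous system \eqref{recA}--\eqref{recB} has at most one solution. Matching this rigidly determined solution against a rational Ansatz, using the rational right-hand sides together with the always-valid identity \eqref{gf11} to eliminate the residual periodic ambiguity, should show that $a_n$ is a rational fraction of $n$; Corollary \ref{cor5} then yields $T_2=0$, contradicting the hypothesis and completing the proof. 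The delicate point is precisely this passage from ``unique, rigidly determined'' to ``rational'': the two difference equations by themselves leave a finite periodic indeterminacy, and closing that gap via \eqref{gf11} is where the real work lies.
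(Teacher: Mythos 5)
Your reduction to the two difference equations is exactly right and matches the paper's setup: after ruling out the adjacent cases via Corollary \ref{cor3}, setting $k=\kappa$ and $k=m$ in \eqref{gf12} kills the $c$-terms (their coefficient is $T_k=0$) and, after dividing by $T_{\kappa+1}$ and $T_{m+1}$, leaves the two inhomogeneous relations \eqref{gf16} and \eqref{gf17} with distinct shifts $2\kappa+1$ and $2m+1$. The problem is everything after that. Your route is: (i) the common homogeneous solution must vanish because it would have to decay like $n^{-1/(2\kappa+1)}$ and like $n^{-1/(2m+1)}$ simultaneously, hence the system has at most one solution; (ii) therefore $a_n$ "should" be rational. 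Step (ii) does not follow from step (i): uniqueness tells you nothing about the shape of the unique solution unless you can \emph{exhibit} a rational sequence satisfying both equations, and that existence is precisely what is at stake. You acknowledge this yourself ("where the real work lies"), so as written the proof has a genuine hole at its central step, and the appeal to \eqref{gf11} to "eliminate the residual periodic ambiguity" is not carried out.

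The gap can be closed much more cheaply, and purely algebraically, which is what the paper does. Subtracting the two normalized equations eliminates $a_n$ and gives
\begin{equation*}
\frac{n-2m-1}{n-2m}a_{n-2m-1}-\frac{n-2\kappa-1}{n-2\kappa}a_{n-2\kappa-1}=Q_1(n),
\end{equation*}
with $Q_1$ an explicit rational function. Shifting $n\to n+2m+1$ here, and $n\to n+2m-2\kappa$ in \eqref{gf17}, and recombining with \eqref{gf16}, one successively eliminates $a_{n-2\kappa-1}$ and $a_{n+2m-2\kappa+1}$ until only $a_n$ survives, with coefficient
\begin{equation*}
\frac{n}{n+1}-\frac{n+2m-2\kappa}{n+2m-2\kappa+1}=\frac{2(\kappa-m)}{(n+1)(n+2m-2\kappa+1)},
\end{equation*}
which is nonzero for every $n$ precisely because $\kappa\neq m$. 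Hence $a_n$ equals an explicit rational function of $n$ outright, and Corollary \ref{cor5} finishes the argument. No asymptotics, no uniqueness discussion, and no periodic indeterminacy ever arise; the distinctness of the two shifts is used only to make the final coefficient nonvanishing. I recommend you replace your step (i)--(ii) by this elimination.
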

\begin{proof}
If $T_{\kappa +1}=0$ or $T_{m+1}=0$ then by Corollary \ref{cor3} we have $T_2=0$. Suppose that $T_{\kappa+1}\neq 0$ and
$T_{m+1}\neq 0$. Take $k=\kappa$ and $k=m$ in \eqref{gf12} to
get, respectively,
\begin{equation}\label{gf16}
\frac{2}{T_1}\left(a_n-\frac{n-2\kappa-1}{n-2\kappa}a_{n-2\kappa-1}\right)T_{\kappa+1}=\sum_{l=1}^{\kappa}\frac{T_lT_{\kappa-l+1}}{n-2\kappa+2l},
\text{ for } n\geq 2\kappa+1,
\end{equation}
and 
\begin{equation}\label{gf17}
\frac{2}{T_1}\left(a_n-\frac{n-2m-1}{n-2m}a_{n-2m-1}\right)T_{m+1}=\sum_{l=1}^{m}\frac{T_lT_{m-l+1}}{n-2m+2l},
\text{ for } n\geq 2m+1.
\end{equation}
The operation
$Eq\eqref{gf16}/T_{\kappa+1}-Eq\eqref{gf17}/T_m$
gives
\begin{equation}\label{gf18}
\frac{n-2m-1}{n-2m}a_{n-2m-1}-\frac{n-2\kappa-1}{n-2\kappa}a_{n-2\kappa-1}=Q_1(n).
\end{equation}
Assuming $m>\kappa$ and replacing $n$ by $n+2m+1$ (resp. $n+2m-2\kappa$) in \eqref{gf18} (resp. \eqref{gf17}) leads to
\begin{equation}\label{gf19}
\frac{n}{n+1}a_{n}-\frac{n+2m-2\kappa}{n+2m-2\kappa+1}a_{n+2m-2\kappa+1}=Q_1(n+2m+1)
\end{equation}
and
\begin{equation}\label{gf20}
a_{n+2m-2\kappa}-\frac{n-2\kappa-1}{n-2\kappa}a_{n-2\kappa-1}=Q_2(n+2m-2\kappa).
\end{equation}
Now $Eq\eqref{gf16}/T_{\kappa+1}- Eq\eqref{gf20}$ is
the equation
\begin{equation}\label{gf21} 
a_n-a_{n+2m-2\kappa}=Q_3(n).
\end{equation}
Multiplying \eqref{gf21} by $\frac{n+2m-2\kappa}{n+2m-2\kappa+1}$
and using \eqref{gf19} we find
\begin{equation}\label{gf22}
\left(\frac{n}{n+1}-\frac{n+2m-2\kappa}{n+2m-2\kappa+1}\right)a_n=Q_4(n).
\end{equation}
Since the $Q_i(n)$ ($i=1..4$) functions, the right-hand sides of \eqref{gf18}, \eqref{gf19}, \eqref{gf20}, \eqref{gf21} and \eqref{gf22}, are partial fractions of $n$ then  $a_n$ is also a partial fraction of $n$; and by Corollary~\ref{cor5} we deduce $T_2=0$.
\end{proof}



\begin{corollary}\label{cor7}
The following equality is true for $k\geq 3\hbox{ and } n\geq 2k+3.$
\begin{eqnarray}\label{gf23}
T_{k-1}D_{k+1}(a_n-\tilde{
a}_{n-2k-3})-T_{k+1}D_{k}(a_{n-2}-\tilde{
a}_{n-2k-1})=\sum_{l=1}^{k-1}\frac{V_{k,l}}{n-2k+2l},
\end{eqnarray}

where
\begin{itemize}
\item $D_{k,l}=T_kT_{k-l+1}-T_{k+1}T_{k-l}$.
\item $D_{k}=D_{k,1}=T_k^2-T_{k+1}T_{k-1}$.
\item
$V_{k,l}=\frac{T_1}{2}\left(T_lT_{k+1}D_{k-1,l-1}-T_{l+1}T_{k-1}D_{k,l}\right)$.
\item $\tilde{ a}_n=\frac{n}{n+1}a_n$.
\end{itemize}
\end{corollary}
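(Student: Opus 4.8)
The plan is to eliminate the quantities $c_n$ (equivalently the $\omega_n$) from \eqref{gf12} by combining four suitably shifted copies of that identity. First I would record \eqref{gf12} in a compact form adapted to the corollary: since $\frac{m-2K-1}{m-2K}a_{m-2K-1}=\tilde a_{m-2K-1}$, the identity at parameter $K$ and argument $m$ reads
\[
\frac{2}{T_1}\left(a_m-\tilde a_{m-2K-1}\right)T_{K+1}+\left(c_m^{+}-c_{m-2K+1}^{-}\right)T_K=\sum_{l=1}^{K}\frac{T_lT_{K-l+1}}{m-2K+2l},
\]
where I abbreviate $c_m^{+}=\frac{m+2}{m}c_m$ and $c_p^{-}=\frac{p}{p+1}c_p$. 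Call this relation $E(K,m)$; by \eqref{gf12} it holds whenever $K\geq 2$ and $m\geq 2K+1$.

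Next I would take the four instances $E(k+1,n)$, $E(k,n)$, $E(k,n-2)$ and $E(k-1,n-2)$. Inspecting the arguments shows that, together, these four equations involve exactly the four distinct quantities $c_n^{+}$, $c_{n-2}^{+}$, $c_{n-2k-1}^{-}$ and $c_{n-2k+1}^{-}$, each occurring in precisely two of them. Imposing that the combined coefficient of each of these four quantities vanish is a homogeneous linear system in four unknown weights, whose solution is, up to scale, $(T_kT_{k-1},\,-T_{k-1}T_{k+1},\,-T_{k-1}T_{k+1},\,T_kT_{k+1})$. Forming the combination $T_kT_{k-1}E(k+1,n)-T_{k-1}T_{k+1}E(k,n)-T_{k-1}T_{k+1}E(k,n-2)+T_kT_{k+1}E(k-1,n-2)$ therefore cancels every $c$-term. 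The validity constraints $K\geq2$, $m\geq2K+1$ for the four copies translate precisely into the hypotheses $k\geq3$ and $n\geq2k+3$ of the corollary, which is reassuring.

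On the left of the combined identity the surviving $a$-contributions regroup into the four symbols $a_n$, $\tilde a_{n-2k-3}$, $a_{n-2}$, $\tilde a_{n-2k-1}$. Collecting their coefficients and using the two factorizations $T_{k+1}^2-T_kT_{k+2}=D_{k+1}$ and $T_k^2-T_{k-1}T_{k+1}=D_k$ collapses the whole $a$-part to $-\frac{2}{T_1}$ times $T_{k-1}D_{k+1}(a_n-\tilde a_{n-2k-3})-T_{k+1}D_k(a_{n-2}-\tilde a_{n-2k-1})$, i.e.\ exactly the left-hand side of \eqref{gf23} carrying an overall scalar $-\frac{2}{T_1}$.

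It then remains to identify the right-hand side. Here I would reindex each of the four sums $\sum_l T_lT_{K-l+1}/(m-2K+2l)$ so that every denominator takes the common form $n-2k+2j$, with $j$ ranging over $0\leq j\leq k$. The extreme terms $j=0$ and $j=k$ cancel between the appropriate pairs of sums, leaving only $1\leq j\leq k-1$, which matches the summation range of \eqref{gf23}. For each interior $j$ the numerator is a four-term product expression which, after substituting $D_{k-1,l-1}=T_{k-1}T_{k-l+1}-T_kT_{k-l}$ and $D_{k,l}=T_kT_{k-l+1}-T_{k+1}T_{k-l}$, equals $-\frac{2}{T_1}V_{k,j}$; since the $a$-part carries the same factor $-\frac{2}{T_1}$, this common factor cancels and one recovers \eqref{gf23} verbatim. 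The only real obstacle is this bookkeeping: pinning down the null-vector weights, and then performing the reindexing carefully enough both to see the boundary cancellations at $j=0,k$ and to recognize the interior numerators as the defined quantities $V_{k,l}$.
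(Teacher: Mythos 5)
Your proof is correct and is essentially the paper's: the paper obtains \eqref{gf23} from the single combination $T_{k+1}\left(T_{k-1}E(k,n)-T_{k}E(k-1,n-2)\right)-T_{k-1}\left(T_kE(k+1,n)-T_{k+1}E(k,n-2)\right)$, which is exactly the negative of the weights $(T_kT_{k-1},-T_{k-1}T_{k+1},-T_{k-1}T_{k+1},T_kT_{k+1})$ you derive, so the two computations coincide up to an overall sign that cancels from both sides. Your determination of the weights by forcing the $c$-terms to vanish, the range check $k\geq 3$, $n\geq 2k+3$, and the identification of the interior numerators with $-\tfrac{2}{T_1}V_{k,l}$ all check out.
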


\begin{proof}
Denoting the equation \eqref{gf12} by $E(k,n)$ then \eqref{gf23} is
the result of the operation
\begin{equation*}
T_{k+1}\left(T_{k-1}E(k,n)-T_{k}E(k-1,n-2)\right)-T_{k-1}\left(T_kE(k+1,n)-T_{k+1}E(k,n-2)\right).
\end{equation*}
\end{proof}

Now we are in a position to prove Theorem \ref{thmm}.

\section{Proof of Theorem \ref {thmm}}\label{sec3}

\textbf{The proof of a)} \\
As $R_1=R_2=0$, it is enough to show by induction that $R_n=0$ for $n\geq 3$.
For $n=1,2,3$, the equation \eqref{gf7} gives $P_1(x)=x$, $P_2(x)=x^2$ and $P_3(0)=-\frac{R_3\alpha_1}{3\alpha_3}$. But according to equation \eqref{gf6}, for $n=2$,  $P_3(0)=0$ and then $R_3=0$. Now assume that $R_k=0$ for $2\leq k\leq n-1$. According to \eqref{gf7} we have, for $2\leq k\leq n-1$, $P_k(0)=0$ and $P_n(0)=-\frac{R_n\alpha_1}{n\alpha_n}$.
On other hand, by the shift $n\rightarrow n-1$ in \eqref{gf6} we have $P_n(0)=0$ and thus $R_n=0$. As $R(t)=0$, the generating function \eqref{gf0} reduces to  $F(xt)=\sum_{n\geq 0}\alpha_nx^nt^n$ which generates the monomials with $F(t)$ arbitrary.

\noindent\textbf{The proof of b)}\\
According to Corollary \ref{cor2}, it is sufficient to prove that $T_2=0$. In the sequel we will investigate three cases:\\

{\bf\textit { Case 1:}} There exists $k_{0}\geq 3$ such that $D_{k}\neq 0$ for $k\geq k_{0}$.

Considering  Corollary \ref{cor6} we can choose $\tilde{k}\geq k_0$ such that $T_{k}\neq 0$ for $k\geq \tilde{k}-1$. Let, for $k\geq \tilde{k}$, $\bar{D}_{k}=\frac{D_{k}}{T_{k-1}T_{k}}$ and $\bar {E}(k,n)$ be the equation \eqref{gf23} divided by
$T_{k-1}T_{k}T_{k+1}$.

By making the operations
\begin{equation*}
\bar{D}_{k-1}\bar{E}(k,n+2)-\bar{D}_k\bar{E}(k-1,n)-\bar{D}_{k}\bar{E}(k,n)+\bar{D}_{k+1}\bar{E}(k-1,n-2),
\end{equation*}
we can eliminate $\tilde{a}_{n-2k-3}$ and $\tilde{a}_{n-2k-1}$
and keeping only, for $k\geq \tilde{k}+1$, the following equation
\begin{equation}\label{Dk101}
{a}_{n+2}-{a}_{n-4}-\tilde{D}_k({a}_n-{a}_{n-2})=\sum_{l=1}^{k}\frac{{W}_{k,l}}{n-2k+2l}:=Q_k^{(1)}(n),
\end{equation}
where $W_{k,l}$ is independent of $n$ and

$$\tilde{D}_k=\frac{\bar{D}_{k}^2+\bar{D}_{k}\bar{D}_{k-1}+\bar{D}_{k}\bar{D}_{k+1}}{\bar{D}_{k-1}\bar{D}_{k+1}}.$$

Similarly, after eliminating $a_n$ and $a_{n-2}$ by  the operations
\begin{equation}\label{Dk2.1}
\bar{D}_{k-1}\bar{E}(k,n+2)-\bar{D}_{k+1}\bar{E}(k-1,n+2)-\bar{D}_{k-1}\bar{E}(k,n)+\bar{D}_{k}\bar{E}(k-1,n)
\end{equation}

and then shifting $n\rightarrow n+2k+1$ in \eqref{Dk2.1} we
obtain
\begin{equation}\label{Dk3}
\tilde{a}_{n+2}-\tilde{a}_{n-4}-\tilde{D}_k(\tilde{a}_n-\tilde{a}_{n-2})=\sum_{l=1}^{k}\frac{\widetilde{W}_{k,l}}{n+2l+1}:=\widetilde{Q}_k^{(1)}(n),
\end{equation}
where $\widetilde{W}_{k,l}$ is independent of $n$.
 
Now, for $k\neq \kappa\geq \tilde{k}+1$, the equations
\eqref{Dk101} and \eqref{Dk3} give, respectively,
\begin{equation}\label{Dk2}
(\tilde{D}_{\kappa}-\tilde{D}_{k})(a_n-a_{n-2})=Q_k^{(1)}(n)-Q_{\kappa}^{(1)}(n)
\end{equation}
and 
\begin{equation}\label{Dk4}
(\tilde{D}_{\kappa}-\tilde{D}_k)\left(\frac{n}{n+1}a_n-\frac{n-2}{n-1}a_{n-2}\right)=\widetilde{Q}_k^{(1)}(n)-\widetilde{Q}_{\kappa}^{(1)}(n).
\end{equation}

If $\tilde{D}_k\neq \tilde{D}_{\kappa}$ for some $k\neq \kappa\geq
\tilde{k}+1$, then by \eqref{Dk2} and \eqref{Dk4} we can
eliminate $a_{n-2}$ to get that $a_n$ is a rational fraction of
$n$. So, by Corollary~\ref{cor5}, we have $T_2=0$.

If $\tilde{D}_k= D$ for $k\geq \tilde{k}+1$, then
\eqref{Dk101} and \eqref{Dk3} become, respectively,
\begin{equation}\label{Dk6}
a_{n+2}-a_{n-4}-D(a_n-a_{n-2})=Q_k^{(1)}(n)
\end{equation}
and
\begin{equation}\label{Dk66}
\tilde{a}_{n+2}-\tilde{a}_{n-4}-D(\tilde{a}_n-\tilde{a}_{n-2})=\widetilde{Q}_k^{(1)}(n).
\end{equation}

The subtraction $Eq\eqref{Dk6}-Eq\eqref{Dk66}$ leads to
\begin{equation}
\frac{a_{n+2}}{n+3}-\frac{a_{n-4}}{n-3}-D\left(\frac{a_{n}}{n+1}-\frac{a_{n-2}}{n-1}\right)=Q_k^{(2)}(n).\label{Dk7}
\end{equation}
Then the combinations
$\left(Eq\eqref{Dk6}-(n+3)Eq\eqref{Dk7}\right)/2$ and
$(Eq\eqref{Dk6}-(n-3)Eq\eqref{Dk7})/2$ give, respectively,
\begin{equation}\label{Dk8}
\frac{3a_{n-4}}{n-3}-D\left(-\frac{a_n}{n+1}+\frac{2a_{n-2}}{n-1}\right)=Q_k^{(3)}(n)
\end{equation}
and
\begin{equation}\label{Dk9}
\frac{3a_{n+2}}{n+3}-D\left(\frac{2a_n}{n+1}-\frac{a_{n-2}}{n-1}\right)=Q_k^{(4)}(n).
\end{equation}

By shifting $n\rightarrow n+2$ in \eqref{Dk8} we obtain
\begin{equation}\label{Dk10}
\frac{3a_{n-2}}{n-1}-D\left(-\frac{a_{n+2}}{n+3}+\frac{2a_{n}}{n+1}\right)=Q_k^{(3)}(n+2).
\end{equation}

The elimination of $a_{n+2}$ and $a_{n-2}$ by the operations $D
Eq\eqref{Dk9}-3Eq\eqref{Dk10}$ and $3
Eq\eqref{Dk9}-DEq\eqref{Dk10}$, respectively, yields
\begin{equation}\label{Dk11}
\frac{6D-2D^2}{n+1}a_n+\frac{D^2-9}{n-1}a_{n-2}=Q_k^{(5)}(n)
\end{equation} 
and 
\begin{equation}\label{Dk12}
\frac{9-D^2}{n+3}a_{n+2}-\frac{6D-2D^2}{n+1}a_{n}=Q_k^{(6)}(n).
\end{equation} 
Finally, the shifting $n\rightarrow n-2$ in \eqref{Dk12} leads
to
\begin{equation}\label{Dk13}
\frac{9-D^2}{n+1}a_{n}-\frac{6D-2D^2}{n-1}a_{n-2}=Q_k^{(6)}(n-2)\end{equation}
and the operation $(6D-2D^2)Eq
\eqref{Dk11}+(D^2-9)Eq\eqref{Dk13}$ gives
\begin{equation}\label{Dk14}
[(6D-2D^2)^2+(D^2-9)^2]a_n=Q_k^{(7)}(n).
\end{equation}

According to manipulations made above, $Q_k^{(7)}(n)$ is a
partial fraction of $n$. So, if $D\neq 3$, $a_n$ is a partial
fraction of $n$ and then $T_2=0$.

Now, we explore the case $D=3$.
We have from \eqref{Dk8} and \eqref{Dk9}:%
\begin{equation}
Q_{k}^{(3)}\left( n\right) =\frac{1}{2}\left( Q_{k}^{(1)}\left(
n\right)
-\left( n+3\right) Q_{k}^{(2)}\left( n\right) \right)=
\frac{1}{2}\left((n+3)\widetilde{Q}_k^{(1)}(n)-(n+2)Q_{k}^{(1)}\left(
n\right)\right) \label{Eq5}
\end{equation}%
and%
\begin{equation}
Q_{k}^{(4)}\left( n\right) =\frac{1}{2}\left( Q_{k}^{(1)}\left(
n\right)
-\left( n-3\right) Q_{k}^{(2)}\left( n\right) \right)=
\frac{1}{2}\left((n-3)\widetilde{Q}_k^{(1)}(n)-(n-4)Q_{k}^{(1)}\left(
n\right)\right) . \label{Eq6}
\end{equation}%

Remark that $Q_{k}^{(j)}\left( n\right) ,1\leq j\leq 4,$ and
$\widetilde{Q}_k^{(1)}(n)$ are independent of $%
k$. Observe also that, according to the left-hand sides of
\eqref{Dk8} and \eqref{Dk9} for $D=3$, we have%
\begin{equation*}
Q_{k}^{(3)}\left( n+2\right) =Q_{k}^{(4)}\left( n\right) .
\end{equation*}%
From (\ref{Eq5}) and (\ref{Eq6}) we get  
\begin{equation}
(n+4)Q_{k}^{(1)}\left( n+2\right) -(n-4)Q_{k}^{(1)}\left(
n\right) =\left( n+5\right)
\widetilde{Q}_{k}^{(1)}\left( n+2\right) -\left( n-3\right)
\widetilde{Q}_{k}^{(1)}\left( n\right).
\label{Eq7}
\end{equation}%
In (\ref{Eq7}) two partial fractions are equal for natural
numbers and are so for real numbers. By using the expressions of
$Q_{k}^{(1)}\left( n\right)$ and $\widetilde{Q}_{k}^{(1)}\left(
n\right)$ (see \eqref{Dk101} and \eqref{Dk3}) we find that
\begin{eqnarray}\label{Eq7a}
(n+4)Q_{k}^{(1)}\left( n+2\right) -(n-4)Q_{k}^{(1)}\left(
n\right)
&=&\sum_{l=1}^{k}\frac{(n+4)W_{k,l}}{n+2-2k+2l}
-\sum_{l=1}^{k}\frac{(n-4)W_{k,l}}{n-2k+2l} \\
&=&\frac{2W_{k,k}}{n+2}-\frac{(2k-6)W_{k,1}}{n-2k+2}
+\sum_{l=2}^{k}\frac{(2k-2l+4)W_{k,l-1}-(2k-2l-4)W_{k,l}}{n-2k+2l}.\nonumber
\end{eqnarray}
and
\begin{eqnarray}\label{Eq7b}
(n+5)\widetilde{Q}_{k}^{(1)}\left( n+2\right)
-(n-3)\widetilde{Q}_{k}^{(1)}\left( n\right)
&=&\sum_{l=1}^{k}\frac{(n+5)\widetilde{W}_{k,l}}{n+2l+3}
-\sum_{l=1}^{k}\frac{(n-3)\widetilde{W}_{k,l}}{n+2l+1} \\
&=&\frac{6\widetilde{W}_{k,1}}{n+3}-\frac{(2k-2)\widetilde{W}_{k,k}}{n+2k+3}
+\sum_{l=2}^{k}\frac{(-2l+4)\widetilde{W}_{k,l-1}+(2l+4)\widetilde{W}_{k,l}}{n+2l+1}.\nonumber
\end{eqnarray}
Observe that the singularities of \eqref{Eq7a} are even
numbers, whereas the singularities of \eqref{Eq7b} are odd ones. So, we should have
$$W_{k,k}=W_{k,1}=\widetilde{W}_{k,1}=\widetilde{W}_{k,k}=0,$$
$$(2k-2l+4)W_{k,l-1}-(2k-2l-4)W_{k,l}=0$$
and
$$(-2l+4)\widetilde{W}_{k,l-1}+(2l+4)\widetilde{W}_{k,l}=0$$
\text{for} $2\leq l\leq k.$ 
Since $k\geq \tilde{k}+1\geq 4$ and by induction on $l$ all the $W_{k,l}$ and
$\widetilde{W}_{k,l}$ are null. Thus, \eqref{Dk6} reads
\begin{equation}
a_{n+2}-a_{n-4}-3\left( a_{n}-a_{n-2}\right) =0.  \label{Eq8}
\end{equation}%
The solution of (\ref{Eq8}) has the form
\begin{equation}\label{Eq8b}
a_{n}=\left( C_{1}+C_{2}n+C_{3}n^{2}\right)\left(-1\right)^{n}+C_{4}+C_{5}n+C_{6}n^{2}.
\end{equation}

Using \eqref{Eq8b} for $n$ even, the left-hand side of \eqref{gf12} is a partial fraction of $n$ with finite number of singularities. So, by the same arguments as in Corollary \ref{cor5} we obtain $T_{2}=0$.\\

{\bf\textit { Case 2:}} There exists $k_{0}\geq 3$ such that $D_{k}=0$ for $k\geq k_{0}$.

Suppose that $D_k= T_k^2-T_{k-1}T_{k+1}=0$ for all $k\geq k_0$.
First, notice that if there exists a $k_1\geq k_0$ such that $T_{k_1}=0$, then $T_{k_1-1}T_{k_1+1}=0$. So, $T_{k_1-1}=0$ or $T_{k_1+1}=0$ and by Corollary~\ref{cor3}, $T_2=0$. We have also $T_{k_0-1}\neq 0$, otherwise $T_{k_0}=0$ and  by Corollary~\ref{cor3}, $T_2=0$.

Now, for $T_k\neq 0$ $(k\geq k_0-1)$, we have 
\begin{equation}
\frac{T_{k+1}}{T_{k}}=\frac{T_{k}}{T_{k-1}}=\frac{T_{k_{0}}}{T_{k_{0}-1}}.
\label{Eq0.4}
\end{equation}%
This means that 
\begin{equation}\label{Eq1.4}
T_{k}=\left( \frac{T_{k_{0}}}{T_{k_{0}-1}}\right)^{k-k_{0}}T_{k_{0}}=ab^{k}
\end{equation}%
where $a=T_{k_{0}-1}^{k_{0}}/T_{k_{0}}^{k_{0}-1}$ and $b=T_{k_{0}}/T_{k_{0}-1}$.\\
The substitution of $T_{k}$ by $ab^{k}$ in \eqref{gf12} for $k\geq k_{0}$\ leads to the equation%
\begin{equation}
\frac{2}{T_1}b\left( a_{n}-\frac{n-2k-1}{n-2k}a_{n-2k-1}\right)
+\frac{n+2}{n}c_{n}-\frac{%
n-2k+1}{n-2k+2}c_{n-2k+1}=\frac{b^{-k}}{a}\sum_{l=1}^{k}\frac{%
T_{l}T_{k-l+1}}{n-2k+2l}=Q_{k}\left( n\right) .  \label{Eq4}
\end{equation}%
Let denote (\ref{Eq4}) by $\widetilde{E}\left( k,n\right)$ and make the
subtraction $\widetilde{E}\left( k+1,n+2\right) -\widetilde{E}\left( k,n\right)$ to get
\begin{equation}
\frac{2}{T_1}b\left( a_{n+2}-a_{n}\right)
+\frac{n+4}{n+2}c_{n+2}-\frac{n+2}{n}%
c_{n}=Q_{k+1}\left( n+2\right) -Q_{k}\left( n\right) .
\label{Eq11}
\end{equation}%
On the right hand side of  (\ref{Eq11})  we have, for $k\geq k_{0}$, the expression

\begin{eqnarray}
\widetilde{Q}_{k}\left( n\right) &:=&Q_{k+1}\left( n+2\right)
-Q_{k}\left(
n\right)
=\frac{b^{-k-1}}{a}\sum_{l=1}^{k+1}\frac{T_{l}T_{k-l+2}}{%
n-2k+2l}-\frac{b^{-k}}{a}\sum_{l=1}^{k}\frac{T_{l}T_{k-l+1}}{n-2k+2l}\nonumber\\
&=&\frac{b^{-k-1}}{a}\frac{T_{k+1}T_{1}}{n+2}+\frac{b^{-k-1}}{a}\frac{%
T_{k}\left( T_{2}-bT_{1}\right) }{n}+\frac{b^{-k-1}}{a}\sum_{l=1}^{k-1}\frac{T_{l}\left( T_{k-l+2}-bT_{k-l+1}\right)}{n-2k+2l}\nonumber\\
&=&\frac{T_{1}}{n+2}+\frac{T_{2}-bT_{1}}{bn}+\frac{b^{-k-1}}{a}\sum_{l=1}^{k-1}\frac{T_{l}\left(T_{k-l+2}-bT_{k-l+1}\right)}{n-2k+2l}
\end{eqnarray}
from which we deduce
\begin{eqnarray}
\widetilde{Q}_{k+1}\left( n\right)
&=&\frac{T_{1}}{n+2}+\frac{T_{2}-bT_{1}}{%
bn}+\frac{b^{-k-2}}{a}\sum_{l=1}^{k}\frac{T_{l}\left(T_{k-l+3}-bT_{k-l+2}\right) }{n-2k-2+2l}\nonumber\\
&=&\frac{T_{1}}{n+2}+\frac{T_{2}-bT_{1}}{bn}+\frac{b^{-k-2}}{a}\sum_{l=1}^{k-1}\frac{T_{l+1}\left(T_{k-l+2}-bT_{k-l+1}\right)}{n-2k+2l}.
\end{eqnarray}
Now  since the left hand side of equation  (\ref{Eq11}) is independent of $k$, it follows
\begin{equation}
\widetilde{Q}_{k+1}\left( n\right) -\widetilde{Q}_{k}\left(n\right) =\frac{b^{-k-2}}{a}\sum_{l=1}^{k-1}\frac{\left( T_{l+1}-bT_{l}\right)\left( T_{k-l+2}-bT_{k-l+1}\right) }{n-2k+2l}=0.
\end{equation}

As a result, for $1\leq l\leq k-1$ and $k\geq k_{0}$, we have
\begin{equation}
\left( T_{l+1}-bT_{l}\right) \left( T_{k-l+2}-bT_{k-l+1}\right)
=0.
\end{equation}
Let take $k=2\left( k_{0}-2\right) -1$ and $l=k_{0}-2$\ to get
$\left(
T_{k_{0}-1}-bT_{k_{0}-2}\right) ^{2}=0$ and then
$T_{k_{0}-1}=bT_{k_{0}-2},$ (or equivalently $D_{k_0-1}=0$).
Thus, the equations \eqref{Eq0.4} and \eqref{Eq1.4} are valid for $k=k_{0}-1$ and by induction we arrive at $T_{4}=bT_{3},$ (or equivalently $D_{4}=0$). For  $k=4$,  the right-hand side of \eqref{gf23} is null. So, $V_{4,2}=0$ and using $T_5=T_4^2/T_3$ (from $D_{4}=0$) we get $D_3=0$.

On the other side suppose that $T_2\neq 0$, then we can write
$$T_{k}=\left(\frac{T_3}{T_2}\right)^{k-2}T_{2}=ab^k,\;\;\text{for } k\geq 2,$$
where $b=T_3/T_2$ and $a=T_2^3/T_3^2$. Therefore, the equation \eqref{Eq4} reads
\begin{eqnarray}
\frac{2}{T_1}b\left(a_n-\frac{n-2k-1}{n-2k}a_{n-2k-1}\right)+\frac{n+2}{n}c_n-\frac{n-2k+1}{n-2k+2}c_{n-2k+1}=\nonumber\\
=\frac{T_1}{n-2k+2}+\frac{T_1}{n}+\sum_{l=2}^{k-1}\frac{ab}{n-2k+2l},\;\;\hbox{ for } k\geq 2 \hbox{ and } n\geq 2k+1.\label{gf1222}
\end{eqnarray}
When $n=2k+1$ and $n=2k+2$, the equation \eqref{gf1222} gives
\begin{equation}\label{gf12222}
\frac{2}{T_1}ba_{2k+1}+\frac{2k+3}{2k+1}c_{2k+1}=\frac{2}{3}c_2+\frac{T_1}{3}+\frac{T_1}{2k+1}+\sum_{l=2}^{k-1}\frac{ab}{2l+1}
\end{equation}
and
\begin{equation}
\frac{2}{T_1}ba_{2k+2}+\frac{k+2}{k+1}c_{2k+2}=\frac{1}{T_1}a_1b+\frac{3}{4}c_3+\frac{T_1}{4}+\frac{T_1}{2k+2}+\sum_{l=2}^{k-1}\frac{ab}{2l+2}\label{gf12223}
\end{equation}

respectively. Let take $n=2N+1$ in \eqref{gf1222} and use \eqref{gf12222} to obtain the expression
\begin{eqnarray}
&&\frac{2}{3}c_2+\frac{T_1}{3}+\frac{T_1}{2N+1}+\sum_{l=2}^{N-1}\frac{ab}{1+2l}-\frac{2(N-k)b}{2(N-k)+1}\frac{2}{T_1}a_{2(N-k)}-\frac{2(N-k)+2}{2(N-k)+3}c_{2(N-k)+2}\nonumber\\
&&=\frac{T_1}{2(N-k)+3}+\frac{T_1}{2N+1}+\sum_{l=2}^{k-1}\frac{ab}{2(N-k)+2l+1}.\nonumber
\end{eqnarray}

In this last equality let put   $N-k$  instead of $k$ to get
\begin{eqnarray}
-\frac{2bk}{2k+1}\frac{2}{T_1}a_{2k}-\frac{2k+2}{2k+3}c_{2k+2}&=&-\frac{2}{3}c_2-\frac{T_1}{3}-\sum_{l=2}^{N-1}\frac{ab}{1+2l}+\frac{T_1}{2k+3}+\sum_{l=2}^{N-k-1}\frac{ab}{2k+2l+1}\nonumber\\
&=&-\frac{2}{3}c_2-\frac{T_1}{3}+\frac{T_1}{2k+3}-\sum_{l=1}^{k}\frac{ab}{2l+3}.\label{gf12224}
\end{eqnarray}

After defining
$A_1=\frac{a_1}{T_1}+\frac{3}{4}\frac{c_3}{b}+\frac{T_1}{4b}$, $A_2=-\frac{2}{3}\frac{c_2}{b}-\frac{T_1}{3b}$ and $A_3=\frac{T_1}{b}$ and making the operation $$\frac{1}{k+2}\left(\frac{2k+2}{2k+3}\,Eq\eqref{gf12223}+\frac{k+2}{k+1}\,Eq\eqref{gf12224}\right)$$
we have
\begin{eqnarray}\label{an_tel}
&&\frac{2(k+1)}{(k+2)(2k+3)}\frac{2}{T_1}\,a_{2(k+1)}-\frac{2k}{(k+1)(2k+1)}\frac{2}{T_1}\,a_{2k}=\nonumber\\
&=&\frac{A_1(2k+2)+A_3}{(k+2)(2k+3)}
+\frac{A_2}{k+1}+\frac{A_3}{(k+1)(2k+3)}+\frac{2k+2}{(k+2)(2k+3)}\sum_{l=2}^{k-1}\frac{a}{2+2l}-\frac{1}{k+1}\sum_{l=2}^{k+1}\frac{a}{2l+1}\nonumber\\
 &=&-\,{\frac {{2 A_1}}{2\,k+3}}+{\frac {2\,{ A_2}-{ A_3}}{k+2}}+{\frac {{A_2}+{A_3}}{k+1}}+\left(-\frac{1}{2\,k+3}+\frac{1} {k+2}\right)\sum_{l=2}^{k-1}\frac{a}{l+1}-\frac{1}{k+1}\sum_{l=2}^{k+1}\frac{a}{2l+1}\nonumber\\
&=&\frac{ B_1}{k+2}+\frac{B_2}{k+\frac{3}{2}}+\frac { B_3}{k+1}+a \left( \frac{1}{ k+2} -\frac{1}{2}\, \frac{1}{k+\frac{3}{2}}\right) \Psi \left( k+1 \right) -\frac{1}{2}\,{\frac {a}{k+1}}\Psi\left( k+\frac{5}{2} \right),
\end{eqnarray}
where short notations $B_1=\left( -3/2+\gamma \right) a+2{A_1}-{ A_3}$, $B_2=\left( 3/4-\gamma/2\right) a-{ A_1}$, $B_3=\left( -\gamma/2-\ln \left( 2 \right) +4/3 \right) a+{ A_2}+{ A_3}$, ($\gamma$ is Euler's constant) are introduced as well as $\Psi(x)$ which stands for the Digamma function.

Taking $$U_k=\frac{2k}{(k+1)(2k+1)}\frac{2}{T_1}\,a_{2k}$$ and 
\begin{equation}
 G(k+1)
={\frac {{ B_1}}{k+2}}+{\frac {{ B_2}}{k+\frac{3}{2}}}+{\frac {{ 
B_3}}{k+1}}+a \left( \frac{1}{ k+2} -\frac{1}{2}\, \frac{1}{
k+\frac{3}{2}}\right) \Psi \left( k+1 \right) -\frac{1}{2}\,{\frac {a
}{k+1}}\Psi
 \left( k+\frac{5}{2} \right),
\end{equation}
then \eqref{an_tel} can be  written in compact form as
$$U_{k+1}-U_k=G(k+1).$$

The later recurrence is easily solved to give $$U_k=U_3+\sum^{k}_{j=4}G(j).$$

By using the formula $\Psi(j+1)=\Psi(j)+1/j$ 
and the relations \cite[Theorems 3.1 and 3.2]{milgram}

\begin{equation}
\sum_{l=0}^{k}\frac{\Psi(l+\alpha)}{l+\beta}+\sum_{l=0}^{k}\frac{\Psi(l+\beta+1)}{l+\alpha}=\Psi(k+\alpha+1)\Psi(k+\beta+1)-\Psi(\alpha)\Psi(\beta),
\end{equation}

\begin{equation}
\sum_{j=0}^{k}\frac{\Psi(j+\beta)}{j+\beta}=\frac{1}{2}\left[\Psi\, '(k+\beta+1)-\Psi\,'(\beta)+\Psi(k+\beta+1) ^2-\Psi(\beta)^2\right],
\end{equation}
we obtain

\begin{eqnarray}
U_k=\frac{2k}{(k+1)(2k+1)}\frac{2}{T_1}\,a_{2k}&=&\frac{a}{2}
\left( \Psi \left( k+2 \right) \right) ^{2}+ B_1 \Psi \left( k+2 \right)+ \frac{a}{2}\Psi\,'
\left( k+2 \right)+B_2 \Psi \left( k+\frac{3}{2} \right)
+\nonumber\\
&& \left( -\frac{a}{2} \Psi \left( k+\frac{3}{2} \right) + B_3
\right)\Psi \left( k+1 \right) + \frac{a}{ k+1}+\delta_2.\label{a2kasympt}
\end{eqnarray}
From \eqref{a2kasympt} we deduce the asymptotic behaviour of $a_{2k}$ as
$k\to\infty$:
\begin{equation}\label{a2kk}
\frac{2}{T_1}\,a_{2k}=\left(\delta _{1}\left(k +\frac{3}{2}+
\frac{1}{2k}\right)+\frac{3a}{4} + {\frac {5a}{16 k}}+
\frac{a}{32 k^2}-{\frac {3a}{128 k^3}+...}\right)\ln(k)+\delta
_{2}k+\delta _{3}+\frac{\delta _{4} }{k}+\frac{\delta _{5}
}{k^{2}}+\frac{\delta_6}{k^3}+\cdots
\end{equation}
where  coefficients $\delta _{i}$ are defined by (higher terms are omitted)
\begin{eqnarray}\nonumber
&&\delta _{1}=B_1+B_2+B_3,\\\nonumber
&&\delta_2 =
\left(\gamma- {\frac {25}{12}} \right){ B_1} + \left(\gamma+2\ln ( 2 )- {\frac {352}{105}} \right){ B_2} +
 \left(\gamma- {\frac {11}{6}} \right){ B_3} \\\nonumber
&&\qquad+\left(\left(\ln ( 2 ) -{\frac {107}{210}} \right)
\gamma+{
\frac {3439}{2520}}-{\frac {11}{6}}\,\ln  \left( 2
 \right)-\frac{{\pi }^{2}}{12}\right)\,a+\frac {3}{7\,T_1}\,
{a_6},
\\\nonumber
&&\delta_3=\frac{3}{2}\,B_1+{ B_2}+\frac{
1}{2}\,B_3+\frac{3}{2}\,a+\frac{3}{2}\,\delta_2,
\\\nonumber
&&\delta_4=\frac{7}{6}\,{ B_1}+{\frac {25}{24}}\,{
 B_2}+\frac{2}{3}\,{ B_3}+{\frac {11}{8}}\,a+\frac{1}{2}\,\delta_2,
\\\nonumber
&&\delta_5=\frac{1}{8}\,{ B_1}+\frac{1}{16}\,{ B_2}+\frac{1}{8}\,{
B_3}+{\frac {5}{96}}\,a,\\\nonumber
&&\delta_6=-\frac{1}{30}\,{ B_1}+{\frac{13}{960}
}\,{ B_2}-\frac{1}{30}\,{ B_3}-{\frac{1}{96}}\,a,\\\nonumber
&&\quad\vdots \nonumber
\end{eqnarray}

At this step we should remark that $\lim_{k\to \infty}a_{2k}=\infty$ for all
$\delta_i$, $i=1,2,3,...,$ since $a\neq 0$.

Recall that
$c_n=T_1\left(na_n/a_{n-1}-(n-1)\right)/2$, then the
equation \eqref{gf12222} can be written as
\begin{equation}\label{phi}
a_{2k+1}\left(b\frac{2}{T_1}+\frac{T_1}{2}\frac{2k+3}{a_{2k}}\right)=\phi(k),
\end{equation}
where $$\phi \left( k\right)
=\frac{T_1}{2}\frac{2k(2k+3)}{2k+1}-bA_2+\frac{bA_3}{2k+1}+\sum_{l=2}^{k-1}\frac{ab}{2l+1}.$$

$\bullet$ If we suppose  $\lim_{k\to\infty} \frac{a_{2k}}{2k}=\infty ,$ then from \eqref{phi} we deduce on one side%
\begin{equation}
\lim_{k\to\infty} \frac{a_{2k+1}}{2k+1}=\lim_{k\to\infty}
\frac{\frac{\phi \left( k\right) }{2k+1}}{%
\frac{2b}{T_{1}}+\frac{T_{1}}{2}\frac{2k+3}{a_{2k}}}=\frac{T_{1}^{2}}{4b}.
\label{e1}
\end{equation}
On the other side, for $n=2k+3$, \eqref{gf11} reads
\begin{equation}
\frac{4T_2}{T_1^3}\left(
1-\frac{2k}{2k+1}\frac{a_{2k}}{a_{2k+3}}\right)
=\frac{2k+4}{a_{2k+3}%
}-2\frac{2k+3}{a_{2k+2}}+\frac{2k+2}{a_{2k+1}}.  \label{e2}
\end{equation}
Under the assumption $T_2\neq 0$, \eqref{e2} admits the limit $\infty =8b/T_{1}^{2}$, as $k\to\infty$,  which exhibit a contradiction.

$\bullet$ Now if $\lim_{k\to\infty} \frac{a_{2k}}{2k}=\eta_{1}\neq 0,$ then from \eqref{phi} we have
\begin{equation}
\lim_{k\to\infty} \frac{a_{2k+1}}{2k+1}=\lim_{k\to\infty}
\frac{\frac{\phi \left( k\right) }{2k+1}}{\frac{2b}{T_{1}}+\frac{T_{1}}{2}\frac{2k+3}{a_{2k}}}=\frac{\frac{T_{1}}{2}}{\frac{2b}{T_{1}}+\frac{T_{1}}{2\eta_{1}}}:=\eta _{2}.
\label{e3}
\end{equation}
Equation \eqref{gf11} becomes, for $n=2k+2$, 
\begin{equation}
\frac{4T_2}{T_1^3}\left(
1-\frac{2k-1}{2k}\frac{a_{2k-1}}{a_{2k+2}}\right) =\frac{2k+3}{a_{2k+2}}-2\frac{2k+2}{a_{2k+1}}+\frac{2k+1}{a_{2k}}. \label{e4}\end{equation}
And if we assume that $T_2\neq 0$ and $\eta _{2}=\infty ,$ then the limit process $k\to\infty$ in (\ref{e4}) left us with  the contradiction $\infty =2/\eta _{1}.$
But if $\eta _{2}\neq \infty ,$ then by taking the limit in (\ref{e2}) and (\ref{e4}) we obtain, respectively,
\[
\frac{4T_2}{T_1^3}\left( 1-\frac{\eta _{1}}{\eta_{2}}\right) =\frac{2}{\eta _{2}}-\frac{2}{\eta _{1}}
\]
and
\[
\frac{4T_2}{T_1^3}\left( 1-\frac{\eta _{2}}{\eta_{1}}\right) =\frac{2}{\eta _{1}}-\frac{2}{\eta _{2}}.
\]
Adding the two later we get $2-\eta _{1}/\eta _{2}-\eta_{2}/\eta _{1}=0$ and therefore $\eta _{1}=\eta _{2}$. According to (\ref{e3}) $\eta _{1}=0$ which is in contradiction with the initial hypothesis $\eta _{1}\neq 0.$

$\bullet$ Finally if $\lim _{k\to\infty}\frac{a_{2k}}{2k}=0,$ then from
 (\ref{e3}) and \eqref{phi}  we  have respectively  $\lim_{k\to\infty} \frac{a_{2k+1}}{2k+1}=0$ and
\[
\lim_{k\to\infty} \frac{a_{2k+1}}{a_{2k}}=\lim_{k\to\infty}
\frac{\frac{\phi \left( k\right) }{2k}}{\frac{2b}{T_{1}}\frac{a_{2k}}{2k}+\frac{T_{1}}{2}\frac{2k+3}{2k}}=1.
\]

Similarly, from \eqref{gf12223} we obtain $\lim_{k\to\infty}
\frac{a_{2k+2}}{a_{2k+1}}=1.$ Now, since  $\lim_{n\to\infty}
\frac{a_{n}}{n}=0$ and $\lim_{n\to\infty}
\frac{a_{n+1}}{a_{n}}=1$, the left-hand side of \eqref{e4} tends to $0$ as $k\to\infty$. In the other hand, according to \eqref{a2kk},
$\lim _{k\to\infty}\frac{a_{2k}}{2k}=0$ implies that
$\delta_1=\delta_2=0$ and
\begin{eqnarray}\label{a2kkk}
a_{2k}&=&\frac{T_1}{2}\frac{(k+1)(2k+1)}{2k}\left(\frac{a}{2}\left( \Psi \left( k+2 \right) \right) ^{2}+ B_1 \Psi \left( k+2 \right)+ \frac{a}{2}\Psi\,'
\left( k+2 \right)+B_2 \Psi \left( k+\frac{3}{2} \right)
+\right.\nonumber\\
&& \left.\left( -\frac{a}{2} \Psi \left( k+\frac{3}{2}\right) -
B_1-B_2 \right)\Psi \left( k+1 \right) + \frac{a}{k+1}\right),\nonumber\\
&=&\frac{T_1}{2}\left(\left(\frac{3a}{4} + {\frac {5a}{16 k}}+
\frac{a}{32 k^2}-{\frac {3a}{128 k^3}+...}\right)\ln(k)+\delta
_{3}+\frac{\delta _{4} }{k}+\frac{\delta _{5}
}{k^{2}}+\frac{\delta_6}{k^3}+\cdots \right).
\end{eqnarray}
From \eqref{phi} we have
\begin{equation}\label{a2k1}
a_{2k+1}=\frac{\phi(k)}{b\frac{2}{T_1}+\frac{T_1}{2}\frac{2k+3}{a_{2k}}},
\end{equation}
which gives an explicit formula for $a_{2k+1}$. Using \eqref{a2k1}, the right-hand side of \eqref{e4} reads

\begin{equation}
-8\,\frac{b}{T_1}{\frac { k+1 }{\phi \left( k \right) }}+{\frac
{2\,k+3}{a_{2 k+2} }}+ \left( -2\,{ T_1}{\frac {\left( 2k+3 \right) \, \left( k+1 \right) }{\phi \left( k\right) }}+2\,k+1 \right)  \frac{1}{ a_{2k}}.
\end{equation}

By virtue of \eqref{a2kkk}, the limit of the both sides of
\eqref{e4}, as $k\to\infty$, gives $-\frac{8b}{3T_1^2}=0$. So, $b=0$ and $T_k=0$ for $k\geq 3$. Therefore, by corollary \ref{cor3} we have $T_{2}=0$ which contradicts $T_{2}\neq 0$.\\

{\bf\textit{Case 3:}} For every $k_{0}\geq 3$, there exists $k\geq k_{0}$ such that
$D_{k}=0$ or $D_{k}\neq 0$.

To exclude {\it Case 1} and {\it Case 2},
there exists a mixed case with infinitely many $k$ and $\kappa $ such that: $D_{k}=0$
and $D_{\kappa }\neq 0.$ Now, it suffices to take $k_{1}$ and
$k_{2}$, $k_{1}\neq k_{2}$, with $D_{k_{1}}=0$, $D_{k_{1}+1}\neq 0$,
$D_{k_{2}}=0$ and
$D_{k_{2}+1}\neq 0$ to get two equations similar to \eqref{gf16} and \eqref{gf17}.
Consequently, a reasoning analogous to that of Corollary
\ref{cor6} completes the proof.
\begin{flushright}
$\blacksquare$
\end{flushright}

\section{Concluding remarks}\label{sec4}

We have shown that the only polynomial sets (besides the monomial set) $\{P_n\}$ generated by $F(xt-R(t))=\sum_{n\geq 0}\alpha_nP_n(x)t^n$ and satisfying the three-term recursion $xP_n(x)=P_{n+1}(x)+\beta_nP_n(x)+\omega_nP_{n-1}(x)$, are the rescaled ultraspherical, Hermite and Chebychev polynomials of the first kind. In \cite{Bencheikh}, the authors generalized the results obtained in \cite{alsalam} and \cite{Bachhaus} in the context of $d$-orthogonality by considering the polynomials (which fulfils a $(d+1)$-order difference equation) generated by $F((d+1)xt-t^{d+1})$, where $d$ is a positive integer. Recently in \cite{Varma}, the author characterized the Shefer $d$-orthogonal polynomials. These polynomials have the generating function $A(t)\exp(xH(t))$ which has the alternative form $\exp(xH(t)+\ln(A(t)))=F(xU(t)-R(t))$.
So, a natural extension is to look at polynomial sets generated by $F(xU(t)-R(t))$ and satisfying the $(d+1)$-order recursion 
\begin{equation}
xP_n(x)=P_{n+1}(x)+\sum_{l=0}^{d}\gamma_{n}^{l}P_{n-l}(x),\label{dr}
\end{equation}
where $\{\gamma_{n}^{l}\}$, $0\leq l\leq d$, are complex sequences.\\
Currently, we are attempting to generalize the results given here by investigating polynomial sets satisfying the recursion \eqref{dr} and generated by $F(xt-R(t))$. This also provides generalizations of the results given in \cite{Bencheikh} and \cite{Varma}.\\

{\bf Acknowledgements:} One of us (M. B. Z.) would like to thank Prof. Dominique Manchon for precious help and useful discussions and for his  high hospitality at "Laboratoire de
Math\'ematiques, CNRS-UMR 6620 " (Clermont-Ferrand).

\end{document}